\numberwithin{equation}{section}
\setlist[itemize,1]{label=\ensuremath{\diamond}}
\theoremstyle{plain}
\newtheorem{theorem}{Theorem}
\newtheorem{proposition}[theorem]{Proposition}
\newtheorem{lemma}[theorem]{Lemma}
\theoremstyle{definition}
\newtheorem{assumption}[theorem]{Assumption}
\newcommand{\kpp}{\text{kpp}}
\newcommand{\profile}{p}
\newcommand{\deq}{:=}
\newcommand{\rdeq}{=:}
\newcommand{\re}{\mathrm{e}}
\DeclareMathOperator{\Real}{Re}
\DeclareMathOperator{\Imag}{Im}
\newcommand{\norm}[1]{\lVert#1\rVert}
\newcommand{\Norm}[1]{\left\lVert#1\right\rVert}
\newcommand{\absolute}[1]{\lvert#1\rvert}
\newcommand{\Absolute}[1]{\left\lvert#1\right\rvert}
\newcommand{\scalp}[1]{\langle #1\rangle} 
\newcommand{\Scalp}[1]{\left\langle #1\right\rangle}
\newcommand{\transp}[1]{#1^\mathrm{T}}
\newcommand{\Set}[1]{\left\lbrace#1\right\rbrace}
\newcommand{\CC}{{\mathbb C}}
\newcommand{\RR}{{\mathbb R}}
\newcommand\uu{{\underline u}}
\newcommand\cC{{\mathcal C}}
\newcommand\cL{{\mathcal L}}
\newcommand\cO{{\mathcal O}}
\newcommand\cQ{{\mathcal Q}}
\newcommand\cR{{\mathcal R}}
\title{Linear convective stability of a front superposition with unstable connecting state}
\author{Louis Garénaux}
\address{Karlsruhe Institute for Technology, Englerstraße 2, 76131 Karlsruhe, Germany}
\email{{\tt louis.garenaux@kit.edu}}
\thanks{Research of LG was Funded by the Deutsche Forschungsgemeinschaft (DFG, German Research Foundation) – Project-ID 258734477 – SFB 1173.}
\author{Bastian Hilder}
\address{Department of Mathematics, Technical University of Munich, Boltzmannstraße 3, 85748 Garching, Germany}
\email{{\tt bastian.hilder@tum.de}}
\thanks{BH~was partially supported by the Swedish Research Council -- grant no.~2020-00440 -- and the Deutsche Forschungsgemeinschaft (DFG, German Research Foundation) -- Project-IDs 444753754 and 543917644.}
\begin{document}

\begin{abstract}
We study convective stability of a two-front superposition in a reaction-diffusion system. Due to the instability of the connecting equilibrium, long-range semi-strong interaction is expected between the two waves. When restricting to the linear dynamic, we indeed identify that convective stability of superposed waves occurs for fewer propagation speeds than for the corresponding single waves. It reflects the interaction that monostable waves have over long distances. Our method relies on numerical range estimates, that imply time-uniform resolvent bounds. 

\vspace{0.5em}

{\small \paragraph {\bf Keywords:} terrace, monostable front, semi-strong interaction, long-range interactions, reaction-diffusion, linear stability.
}

\vspace{0.5em}

{\small \paragraph {\bf AMS Subject Classifications:} 35C07, 35B35 (Primary), 35G35, 35K57, 35B36, 37C60 (Secondary).
}
\end{abstract}

\maketitle

\tableofcontents

\section{Introduction}

\subsection{Setting and main result}
We study a reaction-diffusion system that models interaction between two invasive species:
\begin{equation}
\label{e:rd}
\begin{cases}
\partial_t u_1 = d\partial_{xx} u_1 + r u_1(1 - u_1) + \alpha_1 u_1 u_2\\
\partial_t u_2 = \partial_{xx} u_2 + u_2(1 - u_2) + \alpha_2 u_1 u_2
\end{cases}
\hspace{4em}
t>0, \quad x\in \RR,
\end{equation}
with positive parameters $(d, r, \alpha_1, \alpha_2)$. 
With $D = \mathrm{diag}(d,1)$ and 
\begin{equation*}
g(u) = 
\begin{pmatrix}
r u_1 (1 - u_1) + \alpha_1 u_1 u_2\\
u_2 (1 - u_2) + \alpha_2 u_1 u_2
\end{pmatrix},
\end{equation*}
system \eqref{e:rd} conveniently rewrites as 
\begin{equation*}
u_t = D u_{xx} + g(u).
\end{equation*}
This system admits four constant equilibria
\begin{equation}
\label{e:constant-solutions}
e_1 = 
\begin{pmatrix}
\frac{r+\alpha_1}{r - \alpha_1 \alpha_2} \\[1ex] \frac{r(1+\alpha_2)}{r - \alpha_1 \alpha_2}
\end{pmatrix},
\hspace{2em}
e_2 =
\begin{pmatrix}
0 \\ 1
\end{pmatrix},
\hspace{2em} 
e_3 =
\begin{pmatrix}
1 \\ 0
\end{pmatrix},
\hspace{2em} 
e_4 = 
\begin{pmatrix}
0 \\ 0
\end{pmatrix},
\end{equation}
that correspond (respectively) to the cohabitation of the two species, the species 2 only, the species 1 only, and an empty environment. We refer to \cite{Iida-Lui-Ninomiya-11} for similar systems involving more equilibria. We assume in the following that \eqref{e:rd} is cooperative, and that the first species has a higher invasion speed than the second one.
\begin{assumption}
\label{a:parameters}
Parameters satisfy $d>1$, $r>1$, and the stability condition 
\begin{equation*}
r - \alpha_1 \alpha_2 > 0.
\end{equation*}
In particular, $e_1$ is spectrally stable while $e_2$, $e_3$ and $e_4$ are spectrally unstable.
\end{assumption}

In addition to constant solutions, \eqref{e:rd} also admits families of invasion front solutions, as discussed in Section \ref{s:single-front}. These solutions have a fix profile $\profile \in \cC^\infty(\RR,\RR^2)$ converging at $\pm \infty$ to equilibria \eqref{e:constant-solutions}, and propagate at constant speed $c\in \RR$:
\begin{equation}
\label{e:traveling-wave}
u(t,x) = \profile(x - c t).
\end{equation}
We are interested in two-stage invasions, which are superpositions of two invasion fronts $(\profile_1, c_1)$ and $(\profile_2, c_2)$ that respectively connect $e_1 \to e_3$ and $e_3 \to e_4$. While both waves are well described separately, the combined pattern $e_1 \to e_3 \to e_4$ is not fully understood when $c_1 < c_2$. Indeed, due to the different front velocities, it is time-dependent in any frame, and thus cannot be constructed as the solution to a time-independent ODE. It is common to rely on a stability approach to describe such a time-dependent pattern, which  comes down to understand the interactions between the two waves.

In most settings, two waves with different propagation speeds interact weakly enough that they behave as if studied separately. A more detailed literature discussion can be found in subsection \ref{s:long-range-interaction}. In contrast, the last works \cite{Holzer-Scheel-14,Faye-Holzer-19a,Girardin-Lam-19} on systems related to \eqref{e:rd} indicate that two invasion fronts interact over large distances. In particular, the propagation speed of one front can be affected by the presence of an other front, even though their distance grows linearly in time. See again subsection \ref{s:long-range-interaction}. 

To contribute to the description of invasion front superposition, we follow a stability approach. We highlight conditions on $(c_1, c_2)$ ensuring stability of the superposed pattern. Since monostable fronts usually come in speed-parameterized families, we focus on identification of stable speed pairs $(c_1, c_2)$.

Let $\chi : \RR \to [0,1]$ be a $\cC^\infty$, monotone partition of $\RR$:
\begin{equation*}
\chi(x) = 
\begin{cases}
0 & x \leq -1, \\
1 & x \geq 1.
\end{cases}
\end{equation*}
Let $\psi_1 < 0 < \psi_2$ be two initial positions, and let $c_0 = \frac{c_1 + c_2}{2}$. The resulting front superposition is defined as 
\begin{equation}
\label{e:terrace-ansatz}
\uu(t,x) =
\big(1 - \chi(x - c_0 t)\big)
\profile_1(x - c_1 t - \psi_1) + \chi(x - c_0 t) \profile_2(x - c_2 t - \psi_2).
\end{equation}

\begin{assumption}
\label{a:ordered-fronts}
The two fronts remain far apart: Speeds and positions are ordered as $c_1 < c_2$ and $\psi_2 - \psi_1 \gg 0$.
\end{assumption}

Although $\uu$ is probably not a solution to \eqref{e:rd}, the stability approach for wave superposition is to look for solutions that remain close to $\uu$. See \cite{Wright-09} for a successful application of this approach. How good of an approximated solution $\uu$ is will be discussed in section \ref{s:residual}. In the present contribution, we focus on the linearization of \eqref{e:rd} at $\uu$:
\begin{equation}
\label{e:main}
v_t = D v_{xx} + J_g(\uu(t,x)) v,
\end{equation}
where $J_g$ denotes the Jacobian matrix of $g$. The instability of $e_3$ and $e_4$ is expected to impact the long-time dynamics of \eqref{e:main}. When studying a single monostable front, the invaded state instability has a well-understood impact. In particular, the stability of the wave in a co-moving frame can be recovered using spatial exponential weights \cite{Sattinger-76}. This mechanism is known as convective stability \cite{Ghazaryan-Latushkin-Schecter-13}.

For a wave superposition, the relation between equilibria instability and full pattern stability through spatial localization has not been described yet. We investigate this connection by restricting to bounded weights. Our conclusion is that convective stability of individual equilibria is not enough for the wave to be stable.
\begin{assumption}
\label{a:speeds}
The low speed $c_1$ is such that there exists a $\kappa_1$ satisfying
\begin{equation}
\label{e:localization-rate-1}
\begin{cases}
{\kappa_1}^2 - c_1 \kappa_1 + (1+\alpha_2) & < 0,\\
d{\kappa_1}^2 - c_1 \kappa_1 - r & < 0.
\end{cases}
\end{equation}
The high speed $c_2$ is such that there exists a $\kappa_2$ satisfying
\begin{equation}
\label{e:localization-rate-2}
d {\kappa_2}^2 - c_2 \kappa_2 + r < 0.
\end{equation}
Furthermore, $c_1$, $c_2$, $\kappa_1$ and $\kappa_2$ satisfy
\begin{equation}
\label{e:localization-rate-3}
d{\kappa_2}^2 - c_2 \kappa_2 + r + \kappa_1 (c_2 - c_1) < 0.
\end{equation}
\end{assumption}

Conditions \eqref{e:localization-rate-1} and \eqref{e:localization-rate-2} are equivalent to $c_1$ convective stability of $e_3$ and $c_2$ convective stability of $e_4$ (respectively). The additional condition \eqref{e:localization-rate-3} is more restrictive than \eqref{e:localization-rate-2}, and thus rules out some speed pairs $(c_1, c_2)$. As noticed in Lemma \ref{l:no-remnant-instability}, it is always possible to fulfill \eqref{e:localization-rate-3} by choosing a large enough $c_2$. Thus, we understand the extra condition \eqref{e:localization-rate-3} as reflecting the two front interactions. Our main result states as follows.
\begin{theorem}
\label{t:main}
Assume that Assumptions \ref{a:parameters}, \ref{a:ordered-fronts} and \ref{a:speeds} are satisfied. There exist positive constants $\alpha$, $C$, $\eta$ such that the following holds. For all $0 < \alpha_1, \alpha_2 < \alpha$, there exists a $\cC^\infty$ weight $\omega : (0,+\infty) \times \RR \to (0,1]$, such that solutions to \eqref{e:main} with initial data $v_0$ satisfy
\begin{equation*}
\Norm{\frac{v(t)}{\omega(t)}}_{L^2(\RR)} \leq C \re^{-\eta t} \Norm{\frac{v_0}{\omega(0)}}_{L^2(\RR)}.
\end{equation*}
\end{theorem}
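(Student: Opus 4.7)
The plan is a weighted energy method. Set $w := v/\omega$ with $\omega = e^{-\phi}$ for a scalar phase $\phi(t,x) \ge 0$, and prove exponential decay of $\|w\|_{L^2}$ by showing that the quadratic form obtained after conjugation is uniformly negative in $(t,x)$. This is the numerical range estimate announced in the abstract, and Gronwall does the rest. Motivated by the three conditions in Assumption~\ref{a:speeds}, I take $\phi$ to be a $\cC^\infty$ mollification of
$$
\widetilde\phi(t,x) \;=\; \kappa_1\,(x-c_1 t-\psi_1)_+ \;+\; (\kappa_2-\kappa_1)\,(x-c_2 t-\psi_2)_+,
$$
so that $\omega\equiv 1$ far to the left of both fronts, $\omega$ decays with spatial rate $\kappa_1$ in the intermediate region where $\uu\approx e_3$, and $\omega$ decays with spatial rate $\kappa_2$ to the right of the fast front. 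Expanding gives $\phi = \kappa_2(x-c_2t) + \kappa_1(c_2-c_1)\,t + \mathrm{const}$ in the rightmost region; the temporal drift $\kappa_1(c_2-c_1)\,t$ is precisely what produces the additional term in the restrictive condition~\eqref{e:localization-rate-3}.

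A direct computation with $v=\omega w$ and one integration by parts yield the energy identity
$$
\tfrac12\,\frac{d}{dt}\,\|w\|_{L^2}^2 \;=\; -\int_\RR (Dw_x,w_x)\,dx \;+\; \int_\RR \bigl\langle \mathcal{M}(t,x)\,w,\,w\bigr\rangle\,dx,
$$
with $\mathcal{M}(t,x) := J_g(\uu(t,x)) + \phi_x(t,x)^2\,D + \phi_t(t,x)\,I$. Dropping the non-positive gradient term, the theorem reduces to establishing a pointwise bound $\mathrm{sym}(\mathcal{M}(t,x)) \le -2\eta\,I$, uniformly in $(t,x)$, for some $\eta>0$.

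I would verify this bound region by region. In the left plateau $\uu\approx e_1$ with $\phi=0$, the symmetric part of $J_g(e_1)$ is negative definite for small $\alpha_1,\alpha_2$ (diagonal $-r u_1^\ast,\,-u_2^\ast$, off-diagonal $\mathcal{O}(\alpha_i)$). In the intermediate plateau $\uu\approx e_3$ with $(\phi_x,\phi_t)=(\kappa_1,-\kappa_1c_1)$, the diagonal entries of $\mathcal{M}$ are exactly the two expressions in~\eqref{e:localization-rate-1} (negative by assumption), and the off-diagonal is $\mathcal{O}(\alpha_1)$. In the right plateau $\uu\approx e_4$ with $(\phi_x,\phi_t)=(\kappa_2,\,-\kappa_2c_2+\kappa_1(c_2-c_1))$, the matrix $\mathcal{M}$ is already diagonal, with first entry $d\kappa_2^2 - c_2\kappa_2 + r + \kappa_1(c_2-c_1)$ (negative by~\eqref{e:localization-rate-3}) and second entry $\kappa_2^2-c_2\kappa_2+1+\kappa_1(c_2-c_1)$, which is strictly smaller thanks to $d,r>1$. \emph{The main obstacle} lies in the two thin transition windows around $x=c_1t+\psi_1$ and $x=c_2t+\psi_2$ where both $\uu$ and $\phi_x$ interpolate between asymptotic values. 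To handle them I would exploit the exponential convergence of $\profile_1,\profile_2$ to their endpoints, so that the slope transition of $\phi$ can be supported where $\uu$ is already close to the relevant equilibrium; the analysis then reduces to an arbitrarily small perturbation of one of the three plateau cases. The smallness of $\alpha_1,\alpha_2$ is essential: at $\alpha_i=0$ the system decouples into two scalar Fisher-KPP equations, and the symmetric part of the linearization along each front profile stays negative in the weighted norm; this persists by continuity for small $\alpha_i$, controlling in particular the off-diagonal entries of $J_g(\profile_j)$ throughout the profile. Combining the five regime-wise estimates supplies a uniform $\eta>0$, and Gronwall closes the proof.
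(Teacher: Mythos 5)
Your construction is essentially the paper's: the same piecewise-linear-in-$x$ phase with slopes $0$, $\kappa_1$, $\kappa_2$ and the temporal drift $\kappa_1(c_2-c_1)t$ in the rightmost region, the same reduction to pointwise negativity of $J_g(\uu)+\phi_x^2D+\phi_t$ with off-diagonal entries absorbed by smallness of $\alpha_1,\alpha_2$, and the same identification of where each inequality of Assumption \ref{a:speeds} enters. The only structural difference is the final step: you close with an energy identity and Gronwall, while the paper converts the numerical-range bound into a resolvent estimate and invokes Pazy's evolution-system theorem, which additionally supplies the well-posedness that your differential inequality tacitly assumes. That difference is cosmetic.

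The gap is in the transition windows, which you correctly single out as the main obstacle but then dispose of too quickly. Translating the profiles so that $\uu$ stays close to a fixed state across the window controls $J_g(\uu)$, but it does nothing for the weight contribution: however thin you make the mollification window, $\phi_x$ must sweep the entire interval between consecutive slopes (e.g.\ all of $[\kappa_1,\kappa_2]$ near $x=c_2t+\psi_2$), and for an intermediate value such as $\phi_x=(\kappa_1+\kappa_2)/2$ the matrix $\phi_x^2D+\phi_t$ is \emph{not} a small perturbation of either adjacent plateau value. So the claimed reduction to ``an arbitrarily small perturbation of one of the three plateau cases'' is false as stated. What saves the construction --- and what the paper's Lemma \ref{l:diagonal-coeff} actually proves via the polynomials $P_2$, $P_4$ --- is convexity: along the interpolation $(\phi_x,\phi_t)=\bigl(\kappa_1+(\kappa_2-\kappa_1)s,\,-c_1\kappa_1-c_2(\kappa_2-\kappa_1)s\bigr)$, $s\in[0,1]$, each diagonal entry of $\phi_x^2D+\phi_t$ is a convex function of $s$ and is therefore bounded by its values at $s=0,1$, which are exactly the plateau quantities you already checked. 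You need to add this argument (and, at the slow-front window, to make sure the slope change sits where $\uu$ is still $\geq(1,1)$ componentwise, i.e.\ on the stable side of $p_1$; placing it where $\uu\approx e_3$ while $\phi_x$ is still near $0$ would expose the unweighted instability of $e_3$). With that repair the proof goes through.
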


\subsection{Long range semi-strong interactions}
\label{s:long-range-interaction}
A two wave interaction that modifies the wave shapes is qualified as semi-strong, see \cite{vanHeijster-Doelman-Kaper-Promislow-10} and the references therein. In contrast, weakly interacting waves behave as if studied separately, while strong interaction refers to wave collision.

Other reaction-diffusion waves whose distance grows linearly have weak interactions. See \cite{Wright-09,Fife-McLeod-77,Lin-Schecter-15} for pulses and bistable fronts superposition without speed modification or position drifts.\footnote{We believe that \cite{Wright-09} system approach adapts to bistable front superposition with very few modifications.} Waves with identical speeds interact semi-strongly, and thus present richer interactions \cite{vanHeijster-Doelman-Kaper-Promislow-10}, including unbounded position drifts. 

The situation is similar in non-parabolic equations, while constant equilibria are rarely exponentially stable in such setting. For Korteweg-de Vries, Schrödinger, wave, or Klein-Gordon equations, pulses traveling with distinct speeds interact weakly and pulses with equal speeds interact semi-strongly. See for example \cite{Martel-Merle-Tsai-02,Martel-Merle-06,Martel-Merle-16,Cote-Munoz-14} and \cite{Eychenne-Valet-23,Nguyen-19}.

In contrast to the above studies, invasion fronts interact semi-strongly even though their distance is growing linearly with time \cite{Girardin-Lam-19}. We believe that this phenomenon is related to the instability of the connecting state ($e_3$ in the present setting). Indeed, the instability of other states does not seem to strengthen interactions, see \cite{Lin-Schecter-15,Carrere-18}. 
Furthermore, instability of the intermediate state is only repaired in weighted topology. When using optimal weighted norms, generic fronts lose all their spatial localization. Thus, the interaction of two generic waves connected by an unstable state can be thought of as the interaction of two $L^\infty$ waves with no spatial localization.

\subsection{Single front dynamics}
Let us stress that the front $(p_2,c_2)$ is unstable when distance is measured with bounded weights. Indeed, behind the invasion, transport is directed towards the unstable infinity. In fact, numerical simulations suggest that initial data close to $p_2$ may converge to a front superposition similar to the one we study, see Figure \ref{fig:numerics1}.

System \eqref{e:rd} also admits an $e_1 \to e_4$ invasion front family $(p_3, c_3)_{c_3\geq c_{3,*}}$. Interestingly enough, numerical simulations also suggest that nearby initial data quickly breaks into a two-stage invasion. For different parameter values than the one considered here, numerical simulations rather showcase locked fronts, where the $p_1$-$p_2$ connection reduces to a $p_3$ wave. To relate $p_3$ convective stability to the existence of stable speed pairs $(c_1, c_2)$ satisfying $c_1 < c_2$ seems an important question to solve, see Figure \ref{fig:numerics2}.

\begin{figure}
\centering
\includegraphics[width=0.6\linewidth]{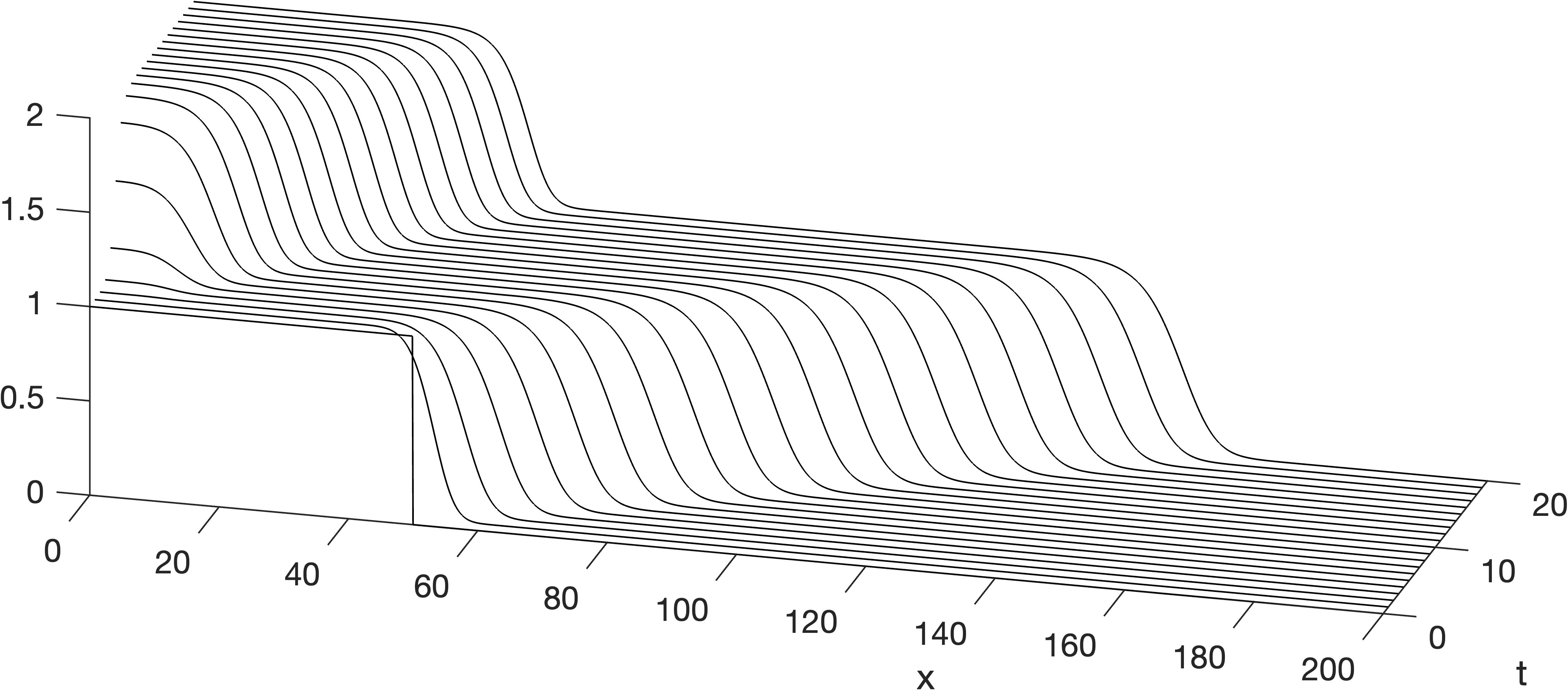}
\hspace{1cm}
\includegraphics[width=0.25\textwidth]{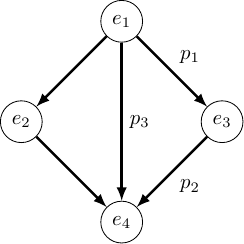}
\caption{Left: Numerical simulation showcasing $p_2$ instability. Parameter values are $(d,r,\alpha_1,\alpha_2) = (4,2,0.75,0.75)$, and the initial datum is a step function of $e_3$ for $x \in (-50,50)$ and $e_4$ everywhere else, with an additional small perturbation in $u_2$ in a neighborhood of $x = 0$. Initially the solution forms a front with a profile close to $p_2$. However, the perturbation in the wake then grows into a secondary slower front with a profile close to $p_1$. Right: All known single fronts (edges) between pairs of equilibria (nodes).}
\label{fig:numerics1}
\end{figure}

\begin{figure}
\centering
\hspace{\stretch{1}}\includegraphics[width=0.45\linewidth]{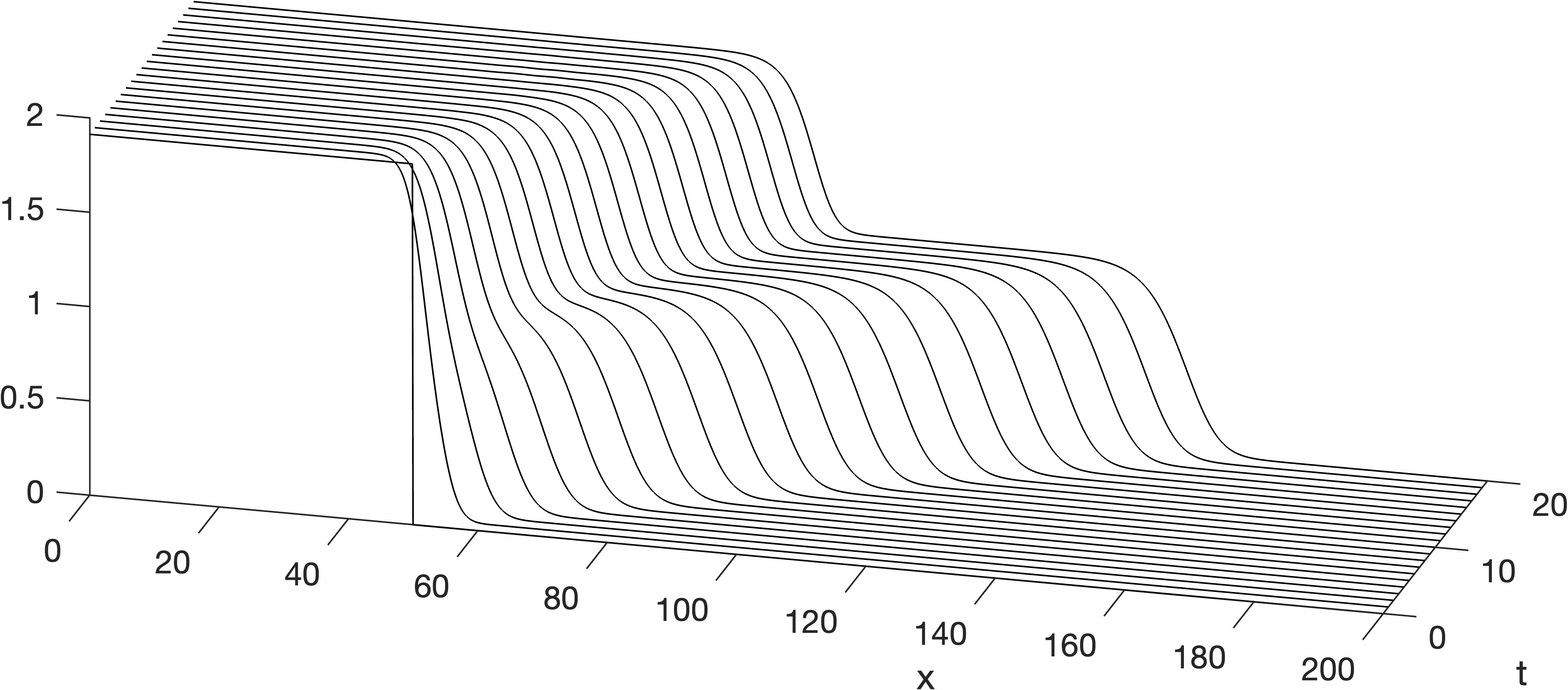}
\hspace{\stretch{1}}
\includegraphics[width=0.45\linewidth]{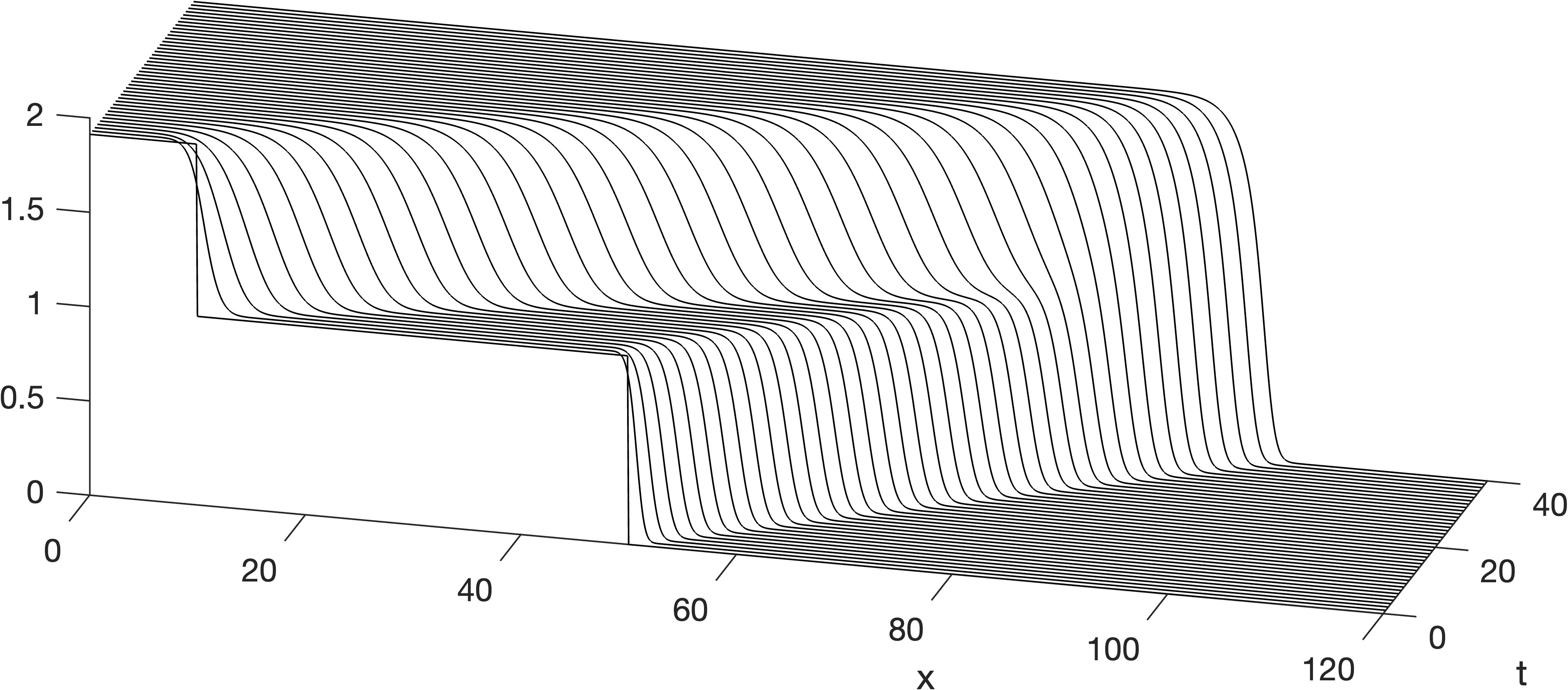}
\hspace{\stretch{1}}
\caption{Numerical simulations showcasing $p_3$ (in)stability. Left: same parameters as in Figure \ref{fig:numerics1} and initial step function with $e_1$ for $x \in (-50,50)$ and $e_4$ everywhere else. The solution initially forms a front connecting $e_1$ and $e_4$ directly, which breaks up quickly into a superposition of $p_1$ and $p_2$. Right: parameter values are $(d,r,\alpha_1,\alpha_2) = (0.2,2,0.75,0.75)$, and initial step function with $e_1$ for $x \in (-10,10)$, $e_3$ for $x \in (-40,40)\setminus (-10,10)$ and $e_4$ everywhere else. Initially a superposition front of $p_1$ and $p_2$ forms with $c_1 > c_2$. This front then collapses to a single interface $p_3$.}
\label{fig:numerics2}
\end{figure}

\subsection{Obstacles regarding the original system}
\label{s:residual}
As of now, only linear dynamics can be handled. With very few efforts, we could incorporate quadratic terms in the argument. Indeed $\omega$ is bounded and the obtained temporal decay is integrable. Thus a Duhamel argument readily covers higher order terms. The main obstacle we are facing to apply the wave separation approach from \cite{Wright-09} are residual terms. 
\begin{lemma}
Assume that the solution to \eqref{e:rd} decomposes as 
\begin{equation*}
u(t,x) = \uu(t, x) + v(t, x).
\end{equation*}
Then the correction $v$ satisfies
\begin{equation*}
v_t = \cR(\uu) + D v_{xx} + J_g(\uu) v + \cQ(v),
\end{equation*}
where residual and quadratic terms are defined as
\begin{align*}
\cR(\uu) \deq {}& - \uu_t + D \uu_{xx} + g(\uu),\\[1ex]
\cQ(v) \deq {}& g(\uu + v) - g(\uu) - J_g(\uu) v.
\end{align*}
\end{lemma}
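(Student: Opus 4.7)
The plan is a direct substitution computation; there is no technical heavy lifting involved. I start from the evolution equation $u_t = D u_{xx} + g(u)$ of \eqref{e:rd}, plug in the decomposition $u = \uu + v$, and reorganize. Both $\partial_t$ and $D\partial_{xx}$ act linearly, so $u_t = \uu_t + v_t$ and $D u_{xx} = D\uu_{xx} + D v_{xx}$ split cleanly. The nonlinear contribution $g(\uu + v)$ does not split additively, but by the very definition of $\cQ$ in the statement one has the tautological identity
\begin{equation*}
g(\uu + v) = g(\uu) + J_g(\uu)\, v + \cQ(v),
\end{equation*}
which is simply a rearrangement of that definition.

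Putting these pieces together, the equation $u_t = Du_{xx} + g(u)$ becomes
\begin{equation*}
\uu_t + v_t = D\uu_{xx} + D v_{xx} + g(\uu) + J_g(\uu)\, v + \cQ(v).
\end{equation*}
Solving for $v_t$ and grouping the $\uu$-dependent forcing terms into the definition of $\cR(\uu)$ immediately yields
\begin{equation*}
v_t = \bigl(-\uu_t + D\uu_{xx} + g(\uu)\bigr) + D v_{xx} + J_g(\uu)\, v + \cQ(v) = \cR(\uu) + D v_{xx} + J_g(\uu)\, v + \cQ(v),
\end{equation*}
which is the claimed identity. The only regularity input needed is that $\uu$ be $\cC^2$ in $x$ and $\cC^1$ in $t$ on the open half-plane $t>0$, which is immediate from the smoothness of $\chi$ and of the profiles $\profile_1,\profile_2$.

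There is essentially no obstacle in the derivation itself: this lemma is a definitional identity whose only purpose is to organize the forthcoming perturbation analysis around the ansatz $\uu$. The one point worth emphasizing, and where the real difficulty of the surrounding program lies, is that $\uu$ defined in \eqref{e:terrace-ansatz} is smooth in $(t,x)$ but is \emph{not} an exact solution of \eqref{e:rd}, so $\cR(\uu)$ is a genuinely nonzero forcing term generated by the cut-off $\chi$ and by the distinct speeds $c_1,c_2,c_0$. Estimating its size and temporal decay in the weighted norm appearing in Theorem \ref{t:main} --- not the derivation of this lemma --- is what subsection \ref{s:residual} flags as the main obstruction to promoting the linear result into a full nonlinear stability statement for \eqref{e:rd}.
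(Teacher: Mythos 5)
Your proof is correct and is exactly the direct substitution the paper has in mind --- its own proof consists of the single line ``Getting the claimed expression is direct.'' Your write-up simply makes that computation explicit, and your closing remark about $\cR(\uu)$ being the genuine obstruction correctly matches the discussion in subsection \ref{s:residual}.
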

\begin{proof}
Getting the claimed expression is direct.
\end{proof}

Since temporal decay of linear dynamics is only possible in weighted spaces, smallness of the residual must be considered in these spaces. There, it behaves to leading order as
\begin{equation}
\label{e:residual-1}
\frac{1}{\omega(t,x)} \cR(\uu(t,x)) \sim \frac{\chi'(x-c_0 t)}{\omega(t,x)} \bigg(p_2(x - c_2 t) - p_1(x - c_1 t)\bigg),
\end{equation}
such term arising from the commutator $[\partial_{xx}, \chi](p_2 - p_1)$. Although \eqref{e:residual-1} has compact support, its $L^\infty$ norm exponentially grows in time. Indeed, the spectral gap \eqref{e:localization-rate-1} requires the weight to be more localized than $p_1 - e_3$.

Although our linear analysis would apply to a different superposition ansatz, to our knowledge no choice leads to better residual behavior than \eqref{e:terrace-ansatz}. A linear superposition
\begin{equation*}
\uu(t,x) \deq p_1(x - c_1 t) + p_2(x - c_2 t) - e_3,
\end{equation*}
does not create linear terms as \eqref{e:residual-1}, since the weighted residual behaves at leading order as 
\begin{equation}
\label{e:residual-2}
\frac{1}{\omega(t,x)} B\bigg(p_2(x - c_2 t) - e_3, p_1(x - c_1 t) - e_3\bigg)
\end{equation}
for some symmetric bilinear map $B:\RR^2 \times \RR^2 \to \RR^2$. However, the absence of cut-off function $\chi$ allows for much more communication between the two profiles. In the region $x \geq c_2 t$, \eqref{e:residual-2} reduces to $\frac{p_1 - e_3}{\omega}$, and is thus insensitive to $p_2$. At $x= c_2 t$, it exponentially grows with time because of the spectral gap condition \eqref{e:localization-rate-1}.

\subsection{Future directions}
\label{s:future}
\subsubsection{Incorporating residual terms} In view of the previous subsection, the more promising direction seems to saturate the spectral gap conditions, that is to work with critical weights. In such a setting, modulation of the front positions and speeds to account for the $L^\infty$ residual as a forcing term seems an interesting scenario. A first step in this regard is to better understand the orbital stability of single invasion fronts \cite{Garenaux-Rodrigues-25}.

\subsubsection{Periodic equilibrium} In many biological and physical models, equilibria are space periodic. For example, the KPP equation with non-local interactions presents two-stage invasions with periodic equilibrium at the back. Such patterns can be described using the approach in \cite{Garenaux-24} when the two invasive equilibria are close. Looking for their stability would be an interesting direction.

\subsection{Technical summary and outline}\label{s:technical-summary}
With the change of variable $v = \omega w$, equation \eqref{e:main} equivalently rewrites as
\begin{align}
\label{e:weighted-dynamic}
w_t = {}& \cL(t) w,\\
\nonumber \deq {}& D w_{xx} + 2D\frac{\omega_x}{\omega} w_x + \left(J_g(\uu) - \frac{\omega_t}{\omega} + D \frac{\omega_{xx}}{\omega}\right) w,
\end{align}
while Theorem \ref{t:main} states temporal decay for $w$. To control the long-time dynamics of the linear equation $\eqref{e:weighted-dynamic}$, we rely on the evolution system theory \cite[chap 5]{Pazy-83}. It requires spectral stability of the parabolic operator family $\cL(t) : H^2(\RR) \subset L^2(\RR) \to L^2(\RR)$.

To obtain time-independent resolvent bounds on $\cL(t)$, we estimate its numerical range. This approach allows us to handle operators and weights that are space and time dependent. In addition, numerical range study appears convenient for scalar KPP equations \cite{Kolmogoroff-Pretrovsky-Piscounoff-37}, since it sharply controls their spectral gaps.
However, numerical range estimates poorly handle systems with coupling coefficients. This results in the smallness assumption on $\alpha_1$ and $\alpha_2$.
The key point of the proof is to carefully design $\omega$ to recover stability. Condition \eqref{e:localization-rate-3} comes from this construction.

The structure of the paper is then as follows. In Section \ref{s:single-front}, we revisit the existence and (in)stability of the steady states and single fronts. We also establish that Assumption \ref{a:speeds} can indeed be satisfied for any parameters $(d,r,\alpha_1,\alpha_2)$ and an open set of speeds $(c_1,c_2)$. In Section \ref{s:num-range}, we establish numerical range bounds for the time-dependent, weighted linear operator $\cL(t)$. Finally, Section \ref{s:proof-main-thm} provides the proof of the main Theorem \ref{t:main}.

\section*{Data availability statement}

The numerical simulations displayed in Figures \ref{fig:numerics1} and \ref{fig:numerics2} have been obtained using Matlab (Version 24.1.0, R2024a), and the code is available through the repository at \url{https://github.com/Bastian-Hilder/FrontCascade}.

\section{Endstates and single fronts}
\label{s:single-front}
Before proving Theorem \ref{t:main}, let us collect a few useful information about constant equilibria and single fronts.

\begin{lemma}
Under Assumption \ref{a:parameters}, $e_1$ is spectrally stable while $e_2$, $e_3$ and $e_4$ are spectrally unstable.
\end{lemma}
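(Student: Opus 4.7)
The plan is to compute, at each equilibrium $e_i$, the matrix symbol $M_i(k) \deq -Dk^2 + J_g(e_i)$ arising from the Fourier transform of the linearization $D\partial_{xx} + J_g(e_i)$. Since the coefficients are constant, the spectrum of the linearization on $L^2(\RR)$ is the union over $k \in \RR$ of the spectra of $M_i(k)$, so it suffices to check the sign of the real parts of the eigenvalues of $M_i(k)$ as $k$ varies.

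For $e_4$, $e_3$, and $e_2$, the Jacobian $J_g$ is upper or lower triangular, which makes the eigenvalues of $M_i(k)$ explicit. At $e_4 = (0,0)$, the eigenvalues are $-dk^2 + r$ and $-k^2+1$, both positive at $k=0$ since $d,r > 1$. At $e_3 = (1,0)$, $J_g(e_3)$ is upper triangular with diagonal entries $-r$ and $1+\alpha_2$, so $M_3(0)$ has an eigenvalue $1+\alpha_2 > 0$. At $e_2 = (0,1)$, $J_g(e_2)$ is lower triangular with diagonal entries $r+\alpha_1$ and $-1$, so $M_2(0)$ has eigenvalue $r+\alpha_1 > 0$. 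In each of the three cases, a positive eigenvalue at $k=0$ places $0$ strictly in the interior of the spectrum's right-half-plane component, hence spectral instability.

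For $e_1$, I would first simplify $J_g(e_1)$ using the two identities satisfied by $(u_1^*, u_2^*) = e_1$ from $g(e_1)=0$, namely $r(1-u_1^*) + \alpha_1 u_2^* = 0$ and $(1-u_2^*) + \alpha_2 u_1^* = 0$. A direct substitution reduces the Jacobian to
\begin{equation*}
J_g(e_1) = \begin{pmatrix} -r u_1^* & \alpha_1 u_1^* \\ \alpha_2 u_2^* & -u_2^* \end{pmatrix}.
\end{equation*}
Then $M_1(k)$ has trace $-(d+1)k^2 - r u_1^* - u_2^* < 0$ and determinant
\begin{equation*}
d k^4 + (d u_2^* + r u_1^*) k^2 + (r - \alpha_1\alpha_2)\,u_1^* u_2^*,
\end{equation*}
which is strictly positive for every $k \in \RR$ thanks to the stability condition $r - \alpha_1 \alpha_2 > 0$ and the positivity of $u_1^*, u_2^*$. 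A $2 \times 2$ matrix with negative trace and positive determinant has both eigenvalues in $\{\Real \lambda < 0\}$, which yields spectral stability of $e_1$.

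The only step that requires any care is the reduction of $J_g(e_1)$ using the equilibrium relations; the rest is a trace/determinant computation, so I do not expect a serious obstacle.
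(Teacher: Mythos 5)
Your proposal is correct and follows essentially the same route as the paper: pass to the Fourier symbol $-Dk^2+J_g(e_i)$ and check eigenvalues, with instability of $e_2,e_3,e_4$ read off from the triangular Jacobians at $k=0$ and stability of $e_1$ from a trace/determinant argument. If anything, your treatment of $e_1$ is slightly more careful than the paper's one-line ``direct computations'' claim, since verifying the sign of the trace and determinant for \emph{all} $k$ (not just $k=0$) is what actually rules out a Turing-type instability when $d\neq 1$.
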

\begin{proof}
It is direct to compute that 
\begin{equation}
\label{e:expr-jacob}
J_g(u) = 
\begin{pmatrix}
r(1-2 u_1) & 0 \\
0 & 1-2 u_2 
\end{pmatrix}
+ 
\begin{pmatrix}
\alpha_1 u_2 & \alpha_1 u_1 \\
\alpha_2 u_2 & \alpha_2 u_1
\end{pmatrix}.
\end{equation}
Direct computations show that $J_g(e_1)$ has two stable eigenvalues, while $J_g(e_2)$, $J_g(e_3)$ and $J_g(e_4)$ all have at least one unstable eigenvalue. Using Fourier transform, spectral (in)stability of $J_g(e)$ is equivalent to $L^2(\RR)$ spectral (in)stability of $D\partial_{xx} + J_g(e)$.
\end{proof}

We now discuss the existence of invasion fronts. Insert the traveling wave ansatz \eqref{e:traveling-wave} into \eqref{e:rd} to obtain the profile equation 
\begin{equation}
\label{e:profile-equation}
0 = D\profile'' + c\profile' + g(\profile).
\end{equation}
\begin{lemma}
There exists $c_{1,*}>0$ such that for all $c_1 \geq c_{1,*}$, there exists $\profile_1:\RR \to \RR^2$ such that \eqref{e:profile-equation} is satisfied, and 
\begin{equation*}
\lim_{-\infty} \profile_1 = e_1, 
\hspace{4em}
\lim_{+\infty} \profile_1 = e_3.
\end{equation*}
Furthermore, components of $\profile_1$ are monotone, and there exists $x_1 \in \RR$ such that for all $x < x_1$
\begin{equation}
\label{e:phi-1-monotone}
\profile_{1,1}(x) \geq 1, 
\hspace{4em}
\profile_{1,2}(x) \geq 1, 
\end{equation} 
\end{lemma}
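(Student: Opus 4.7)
The plan is to reduce the profile equation to a standard cooperative monostable front problem and invoke classical existence theory.

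First I would verify the cooperative/monostable structure on the componentwise order interval $[e_3,e_1]$. The Jacobian of $g$ computed in \eqref{e:expr-jacob} has off-diagonal entries $\alpha_1 u_1$ and $\alpha_2 u_2$, which are non-negative on that interval, so the system is cooperative there. The endstates satisfy $e_3 < e_1$ (indeed both coordinates of $e_1 - e_3$ are strictly positive under Assumption \ref{a:parameters}); $e_1$ is linearly stable and $e_3$ is linearly unstable by the previous lemma. A direct check also shows that neither $e_2$ nor $e_4$ belongs to the open order interval $(e_3,e_1)$ (both have vanishing first component), so no intermediate equilibrium obstructs the heteroclinic.

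Next, after the substitution $\tilde\profile(\xi) = \profile_1(-\xi)$, the profile equation \eqref{e:profile-equation} becomes $D\tilde\profile'' - c_1 \tilde\profile' + g(\tilde\profile) = 0$ with $\tilde\profile(-\infty) = e_3$ (unstable) and $\tilde\profile(+\infty) = e_1$ (stable). This is exactly the classical cooperative monostable invasion setup treated by Volpert--Volpert--Volpert, Lewis--Li--Weinberger, and more recently in \cite{Girardin-Lam-19}. These results guarantee the existence of a minimum speed $c_{1,*} > 0$ and of componentwise monotone profiles for every $c \geq c_{1,*}$. Undoing the substitution produces $\profile_1$ with both components monotone and with the required limits at $\pm \infty$.

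Finally, the bound \eqref{e:phi-1-monotone} follows from monotonicity together with the asymptotic values. Both coordinates of $\profile_1(-\infty) = e_1$ strictly exceed $1$: for the first, $\alpha_1 > 0$ gives $r + \alpha_1 > r - \alpha_1\alpha_2$; for the second, $r(1+\alpha_2) > r - \alpha_1\alpha_2$ reduces to $r\alpha_2 + \alpha_1\alpha_2 > 0$, which holds by positivity of the parameters. By continuity there exists $x_1 \in \RR$ such that $\profile_{1,1}(x) \geq 1$ and $\profile_{1,2}(x) \geq 1$ for every $x < x_1$.

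The step I expect to be the main obstacle is checking that the hypotheses of the chosen cooperative monostable existence theorem apply verbatim to this particular system: in particular the characterization of $c_{1,*}$ via the linearization at $e_3$ (whose Jacobian is upper triangular with unstable eigenvalue $1+\alpha_2$ associated to a positive eigenvector) and the construction of an explicit sub-/super-solution pair on $[e_3,e_1]$ to start a monotone iteration. Since Theorem \ref{t:main} only uses the existence of an open range of admissible speeds, an alternative would be to bypass the spreading speed theory altogether and construct a monotone front directly by a Berestycki--Hamel style sub-/super-solution argument adapted to cooperative systems.
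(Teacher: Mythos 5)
Your proposal is correct and follows essentially the same route as the paper: verify that the system is cooperative in the sense of Volpert--Volpert--Volpert, invoke their existence theorem for monotone monostable fronts to obtain $c_{1,*}$ and monotone profiles for all $c_1 \geq c_{1,*}$, and deduce \eqref{e:phi-1-monotone} from monotonicity together with the componentwise inequality $e_1 > \transp{(1,1)}$. The extra checks you supply (absence of intermediate equilibria in the order interval, the explicit verification that both coordinates of $e_1$ exceed $1$) are consistent with, and slightly more detailed than, the paper's argument.
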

\begin{proof}
Following the definition from \cite[\S 2.2]{Volpert-Volpert-Volpert-94}, it is direct to see that system \eqref{e:rd} is monotone. In particular, notice that $e_1 > e_3$ component-wise. Applying \cite[Theorem 2.2]{Volpert-Volpert-Volpert-94} provides $c_{1,*}>0$ and profiles $\profile_1$ for all speeds $c_1 \geq c_{1,*}$. Both components of $\profile_1$ are monotone, thus decreasing, and \eqref{e:phi-1-monotone} follows from the component-wise inequality $e_1 > \transp{(1,1)}$.
\end{proof}

\begin{lemma}
There exists $c_{2,*}>0$ such that for all $c_2 \geq c_{2,*}$, there exists $\profile_2:\RR \to \RR^2$ such that \eqref{e:profile-equation} is satisfied, and 
\begin{equation*}
\lim_{-\infty} \profile_2 = e_3, 
\hspace{4em}
\lim_{+\infty} \profile_2 = e_4.
\end{equation*}
Furthermore, for every $\varepsilon>0$, there exists $x_2 \in \RR$ such that for all $x < x_2$,
\begin{equation}
\label{e:phi-2-monotone}
\profile_{2,1}(x) \geq 1 - \varepsilon. 
\end{equation} 
\end{lemma}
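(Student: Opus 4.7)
The plan is to exploit the invariance of the subspace $\{u_2 \equiv 0\}$ for system \eqref{e:rd}. Both the logistic term $u_2(1-u_2)$ and the coupling $\alpha_2 u_1 u_2$ in the second equation vanish when $u_2 = 0$, so it is natural to look for a profile of the form $\profile_2(x) = \transp{(\varphi(x), 0)}$. The second component of \eqref{e:profile-equation} is then automatically satisfied, and the first component reduces to the scalar Fisher-KPP traveling wave equation
\[
d \varphi'' + c_2 \varphi' + r \varphi(1-\varphi) = 0.
\]

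Next I would invoke classical monostable front theory \cite{Kolmogoroff-Pretrovsky-Piscounoff-37}: for every $c_2 \geq c_{2,*} \deq 2\sqrt{dr}$, this scalar equation admits a smooth, strictly monotone decreasing solution $\varphi : \RR \to (0,1)$ with $\lim_{x\to -\infty} \varphi(x) = 1$ and $\lim_{x\to +\infty} \varphi(x) = 0$. Setting $\profile_2 \deq \transp{(\varphi, 0)}$ then yields a smooth profile connecting $e_3$ to $e_4$ at speed $c_2$, verifying the existence part of the lemma. The threshold $c_{2,*} = 2\sqrt{dr}$ is also consistent with the solvability of the spectral condition \eqref{e:localization-rate-2}, whose minimum over $\kappa_2$ is $r - c_2^2/(4d)$.

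For the asymptotic bound \eqref{e:phi-2-monotone}, the convergence $\varphi(x) \to 1$ as $x \to -\infty$ suffices: given $\varepsilon > 0$, choose $x_2 \in \RR$ such that $|\varphi(x) - 1| \leq \varepsilon$ for all $x \leq x_2$, which yields $\profile_{2,1}(x) = \varphi(x) \geq 1 - \varepsilon$ on this range.

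I do not anticipate a real obstacle here: once the invariance of $\{u_2 \equiv 0\}$ is noticed, the entire construction reduces to a direct citation of scalar Fisher-KPP theory. The only minor conceptual point is that, unlike the $\profile_1$ situation, we do not need to invoke monotone-system theory à la Volpert-Volpert-Volpert, since we are content with a trivial second component; accordingly, the lemma does not claim monotonicity for $\profile_{2,2}$ (which is identically zero) but only the weaker asymptotic bound \eqref{e:phi-2-monotone} on the first component, which is exactly what subsequent arguments require.
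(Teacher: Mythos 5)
Your proposal is correct and follows essentially the same route as the paper: set $\profile_{2,2} \equiv 0$ so that \eqref{e:profile-equation} decouples into the scalar Fisher--KPP profile equation for $\profile_{2,1}$, then cite \cite{Kolmogoroff-Pretrovsky-Piscounoff-37} to get a decreasing profile connecting $1$ to $0$ for every $c_2 \geq c_{2,*} = 2\sqrt{dr}$, with \eqref{e:phi-2-monotone} following from the limit at $-\infty$. No discrepancies to report.
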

\begin{proof}
Looking for a profile 
$\profile_2 = 
\begin{pmatrix} 
\profile_{2,1} \\
\profile_{2,2}
\end{pmatrix}$
with $\profile_{2,2} = 0$, we see that the system \eqref{e:profile-equation} decouples, and is equivalent to the scalar KPP profile equation
\begin{equation*}
0 = d\profile_{2,1}'' + c_2 \profile_{2,1}' + r \profile_{2,1}(1 - \profile_{2,1}).
\end{equation*}
Applying \cite{Kolmogoroff-Pretrovsky-Piscounoff-37}, we obtain the slowest speed $c_{2,*} = 2\sqrt{dr}$, and existence of a decreasing profile connecting $1$ to $0$ for all speeds $c_2 \geq c_{2, *}$.
\end{proof}

We conclude this section with a convective stability criterion for the unstable equilibria.
\begin{lemma}
Let $c_1\geq c_{1,*}$, and $\kappa_1>0$ such that \eqref{e:localization-rate-1} holds. Then diagonal coefficients of $J_g(e_3) - c_1 \kappa_1 + D {\kappa_1}^2$ are negative.
\end{lemma}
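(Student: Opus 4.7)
The statement is essentially a tautological rewriting, so the plan is to unwind the definitions and check that the two diagonal entries of the matrix $J_g(e_3) - c_1 \kappa_1 + D\kappa_1^2$ are exactly the two expressions appearing on the left-hand side of \eqref{e:localization-rate-1}.

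First I would substitute $u = e_3 = \transp{(1,0)}$ into the formula \eqref{e:expr-jacob} for the Jacobian, which yields
\begin{equation*}
J_g(e_3) = \begin{pmatrix} -r & \alpha_1 \\ 0 & 1+\alpha_2 \end{pmatrix}.
\end{equation*}
The off-diagonal entries play no role for the present claim, so they can be set aside.

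Next, since $D = \diag(d,1)$, the diagonal of $J_g(e_3) - c_1\kappa_1 + D\kappa_1^2$ is
\begin{equation*}
\bigl(d\kappa_1^2 - c_1\kappa_1 - r,\ \kappa_1^2 - c_1\kappa_1 + (1+\alpha_2)\bigr).
\end{equation*}
These two quantities are precisely the left-hand sides of the two inequalities in \eqref{e:localization-rate-1}, which are assumed to be strictly negative. The conclusion is immediate.

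There is no real obstacle here: the lemma is just a dictionary between the spectral/convective stability condition \eqref{e:localization-rate-1} and the diagonal coefficients of the shifted Jacobian that will be fed into the numerical range argument of Section \ref{s:num-range}. The only care needed is to read the expression $J_g(e_3) - c_1\kappa_1 + D\kappa_1^2$ with $c_1\kappa_1$ interpreted as a scalar multiple of the identity, so that both summands are genuine $2\times 2$ matrices, and to keep track of which row corresponds to which inequality in \eqref{e:localization-rate-1}.
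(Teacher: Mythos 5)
Your proposal is correct and follows exactly the paper's own argument: substitute $e_3$ into \eqref{e:expr-jacob} and observe that the diagonal entries of $J_g(e_3) - c_1\kappa_1 + D\kappa_1^2$ are precisely the left-hand sides of \eqref{e:localization-rate-1} (with the first row matching the second inequality and vice versa). The only difference is that you spell out the computation of $J_g(e_3)$ explicitly, which the paper leaves implicit.
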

\begin{proof}
Using expression \eqref{e:expr-jacob}, we find that diagonal coefficients of $J_g(e_3) - c_1 \kappa_1 + D {\kappa_1}^2$ are precisely the left-hand sides in \eqref{e:localization-rate-1}.
\end{proof}
\begin{lemma}
Let $c_2\geq c_{2,*}$, and $\kappa_2>0$ such that \eqref{e:localization-rate-2} holds. Then diagonal coefficients of $J_g(e_4) - c_2 \kappa_2 + D {\kappa_2}^2$ are negative.
\end{lemma}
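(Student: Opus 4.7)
The plan is to mirror the structure of the preceding lemma: evaluate $J_g$ at $e_4$, read off the diagonal entries of $J_g(e_4) - c_2\kappa_2 + D{\kappa_2}^2$, and check their signs. Since $e_4 = \transp{(0,0)}$, all the $u$-dependent terms in formula \eqref{e:expr-jacob} vanish and $J_g(e_4) = \diag(r,1)$. Combined with $D = \diag(d,1)$, the two diagonal entries of the matrix in question are precisely
\begin{equation*}
d{\kappa_2}^2 - c_2\kappa_2 + r \qquad\text{and}\qquad {\kappa_2}^2 - c_2\kappa_2 + 1.
\end{equation*}

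The first is negative directly by hypothesis \eqref{e:localization-rate-2}. Unlike the $e_3$ case, only one of the two diagonal entries appears in the assumption, so the second has to be deduced. I would use the algebraic identity
\begin{equation*}
{\kappa_2}^2 - c_2\kappa_2 + 1 = \bigl(d{\kappa_2}^2 - c_2\kappa_2 + r\bigr) - (d-1){\kappa_2}^2 - (r-1).
\end{equation*}
By Assumption \ref{a:parameters}, $d>1$ and $r>1$, so the two quantities $(d-1){\kappa_2}^2$ and $(r-1)$ are strictly positive (using $\kappa_2>0$). Together with the negativity of the bracketed term supplied by \eqref{e:localization-rate-2}, this shows the second diagonal entry is strictly smaller than the first, and therefore also negative.

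No real obstacle is expected: the proof reduces to a short computation at $e_4$ followed by a comparison of two quadratic trinomials. The only conceptual point worth noting is that the parameters $d>1$ and $r>1$ — encoding that species $1$ diffuses and grows faster than species $2$ — are precisely what allows the single convective-stability condition \eqref{e:localization-rate-2} to control both diagonal entries, without needing to impose a second inequality as was done for $e_3$ in \eqref{e:localization-rate-1}.
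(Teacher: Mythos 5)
Your proof is correct and matches the paper's argument: the paper likewise reduces both diagonal entries to the bound $d{\kappa_2}^2 - c_2\kappa_2 + r < 0$ by noting that Assumption \ref{a:parameters} ($d>1$, $r>1$) makes the second entry smaller than the first. Your explicit algebraic identity just spells out the comparison the paper states in one line.
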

\begin{proof}
The proof is almost identical to the previous one. Due to Assumption \ref{a:parameters}, diagonal coefficients of $J_g(e_4) - c_2 \kappa_2 + D {\kappa_2}^2$ are less than $r - c_2 \kappa_2 + d {\kappa_2}^2$. This is precisely the left-hand side in \eqref{e:localization-rate-2}.
\end{proof}

Finally, we show that it is indeed possible to satisfy all conditions in Assumption \eqref{a:speeds}. In particular, equilibria $e_3$ and $e_4$ are not remnantly unstable for large speeds, following \cite{Faye-Holzer-Scheel-Siemer-22} classification.
\begin{lemma}
\label{l:no-remnant-instability}
For any values of $(d,r,\alpha_1, \alpha_2)$, Assumption \ref{a:speeds} is fulfilled by taking $c_1$ and $c_2$ large enough.
\end{lemma}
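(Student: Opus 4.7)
\begin{proof}[Proof proposal]
The plan is purely constructive: I exhibit explicit scalings for $c_1$, $c_2$, $\kappa_1$, $\kappa_2$ in terms of a single large parameter and verify the three inequalities.

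\textbf{Choice of $\kappa_1$.} Treat the left-hand sides of \eqref{e:localization-rate-1} as quadratics in $\kappa_1$. The first polynomial has discriminant $c_1^2-4(1+\alpha_2)$, which is positive for $c_1>2\sqrt{1+\alpha_2}$; its roots are $\kappa_1^\pm=\tfrac12\bigl(c_1\pm\sqrt{c_1^2-4(1+\alpha_2)}\bigr)$. The smaller root satisfies $\kappa_1^-\sim (1+\alpha_2)/c_1$ as $c_1\to\infty$. The second polynomial in \eqref{e:localization-rate-1} has roots of opposite sign because its constant term $-r$ is negative, so any small positive $\kappa_1$ lies between them. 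Setting $\kappa_1 \deq 2(1+\alpha_2)/c_1$, a one-line computation shows that for $c_1$ large enough both inequalities in \eqref{e:localization-rate-1} reduce to $-(1+\alpha_2)+o(1)<0$ and $-2(1+\alpha_2)-r+o(1)<0$, hence hold.

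\textbf{Choice of $\kappa_2$.} The quadratic $P(\kappa_2)\deq d\kappa_2^2-c_2\kappa_2+r$ attains its minimum at $\kappa_2^\star=c_2/(2d)$ with value $r-c_2^2/(4d)$. Picking $\kappa_2=\kappa_2^\star$, condition \eqref{e:localization-rate-2} is satisfied as soon as $c_2>2\sqrt{dr}$, and moreover $P(\kappa_2)\to -\infty$ as $c_2\to\infty$.

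\textbf{Verifying \eqref{e:localization-rate-3}.} With the above choices,
\begin{equation*}
P(\kappa_2)+\kappa_1(c_2-c_1) \;=\; r-\frac{c_2^2}{4d}+\frac{2(1+\alpha_2)(c_2-c_1)}{c_1}.
\end{equation*}
Fix any ratio $c_2=\lambda c_1$ with $\lambda>1$ (this ensures $c_1<c_2$ as required by Assumption \ref{a:ordered-fronts}). Then the last term equals the bounded quantity $2(1+\alpha_2)(\lambda-1)$, while $-c_2^2/(4d)=-\lambda^2 c_1^2/(4d)\to-\infty$. Therefore, choosing $c_1$ sufficiently large (in particular larger than $c_{1,*}$, larger than $2\sqrt{1+\alpha_2}$, and such that $\lambda c_1>\max\{c_{2,*},2\sqrt{dr}\}$), all of \eqref{e:localization-rate-1}, \eqref{e:localization-rate-2} and \eqref{e:localization-rate-3} simultaneously hold. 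Since all strict inequalities are stable under perturbation, the set of admissible $(c_1,c_2)$ is open.
\end{proof}

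\textbf{Expected difficulty.} There is no real obstacle here: the lemma is a compatibility check between three open inequalities. The only mildly delicate point is that \eqref{e:localization-rate-3} couples $\kappa_1$ and $\kappa_2$ through the factor $\kappa_1(c_2-c_1)$, so one must exploit the freedom to make $\kappa_1$ small (of order $1/c_1$) in order to absorb the linear growth of $c_2-c_1$, while simultaneously ensuring that $P(\kappa_2)$ becomes very negative. Both effects are driven by sending $c_1,c_2\to\infty$ along a fixed ray $c_2=\lambda c_1$, which makes the verification essentially a one-parameter asymptotic calculation.
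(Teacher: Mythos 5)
Your proof is correct and follows essentially the same route as the paper's: both arguments verify the three inequalities asymptotically as the speeds tend to infinity, the only difference being that the paper describes the full admissible intervals for $\kappa_1,\kappa_2$ while you pick explicit representatives. Your concrete scalings $\kappa_1 = 2(1+\alpha_2)/c_1$, $\kappa_2 = c_2/(2d)$ and the ray $c_2=\lambda c_1$ make the verification of \eqref{e:localization-rate-3} more transparent than the paper's terser ``increasing $c_2$ if necessary'' step, since they exhibit exactly how the quadratic decay of $d\kappa_2^2-c_2\kappa_2+r$ absorbs the linear growth of $\kappa_1(c_2-c_1)$.
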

\begin{proof}
We begin with a proof that \eqref{e:localization-rate-1} is fulfilled when $c_1$ is large enough. Given $c_1 > 2\sqrt{1 + \alpha_1}$ each condition in \eqref{e:localization-rate-1} have explicit solution sets
\begin{equation*}
\Absolute{\kappa_1 - \frac{c_1}{2}} < \frac{1}{2} \sqrt{{c_1}^2 - 4 (1 + \alpha_1)},
\end{equation*}
and
\begin{equation*}
\Absolute{\kappa_1 - \frac{c_1}{2d}} < \frac{1}{2d} \sqrt{{c_1}^2 + 4d r}.
\end{equation*}
Since these two intervals are respectively centered at $\frac{c_1}{2} > \frac{c_1}{2d}$, both conditions in \eqref{e:localization-rate-1} can be fulfilled simultaneously precisely when
\begin{equation*}
\frac{c_1}{2} - \frac{1}{2}\sqrt{{c_1}^2 - 4(1+\alpha)} < \frac{c_1}{2d} + \frac{1}{2d}\sqrt{{c_1}^2 + 4dr}.
\end{equation*}
Dividing by $\frac{c_1}{2}$ and Taylor expanding when $c_1\to +\infty$, this condition becomes
\begin{equation*}
\frac{1 + \alpha - r}{{c_1}^2} + \cO_{c_1 \to +\infty}\left(\frac{1}{{c_1}^4}\right) < \frac{1}{d},
\end{equation*}
which is satisfied for all large enough $c_1$. 

We now turn to \eqref{e:localization-rate-2}. The set of admissible $\kappa_2$ for this inequality is non-empty as soon as $c_2 > 2\sqrt{dr}$. Furthermore, when $c_2 \to +\infty$ the lower and upper bounds expand as 
\begin{equation}
\label{e:bound-kappa-2}
\frac{r}{c_2} + \cO_{c_2 \to +\infty}\left(\frac{1}{{c_2}^3}\right) < \kappa_2 < \frac{c_2}{d} - \frac{r}{c_2} + \cO_{c_2 \to +\infty}\left(\frac{1}{{c_2}^3}\right).
\end{equation}

To conclude, let us discuss \eqref{e:localization-rate-3}. We see from \eqref{e:bound-kappa-2} that $\kappa_2 > \kappa_1$ is always possible when $c_2$ is large enough. Increasing $c_2$ if necessary then ensures that \eqref{e:localization-rate-3} holds. The proof is complete.
\end{proof}

\section{Numerical range bounds}
\label{s:num-range}
The goal of this section is to prove a time-uniform resolvent bound on $\cL(t)$, see the later Proposition \ref{p:resolvent-bound}. It essentially reduces to locate the numerical range of $\cL(t)$, a subset of the complex plane defined as 
\begin{equation*}
R_{\cL(t)} \deq \Set{\scalp{\cL(t) u, u}: u \in H^2(\RR, \CC^2), \norm{u}_{L^2(\RR)} = 1}.
\end{equation*}
Since $\cL(t)$ is parabolic, our goal is to include $R_{\cL(t)}$ in a stable sector. To keep the presentation simple, we first showcase the estimate for a single scalar front, and then present the two-stage invasion case. 

The simpler scalar case discussion is independent of Theorem \ref{t:main} proof. While such a wave is usually handled with a co-moving frame approach to get rid of time dependence, we take the occasion to illustrate that our method also applies in the steady frame.

\subsection{Single scalar wave}
\label{s:num-range-one-front}
In this subsection only, we replace $\uu(t,x)$ with a single scalar KPP front $p(x - ct)$ with speed $c > 2\sqrt{rd}$. We further write the weight in exponential form: $\omega(t,x) = \re^{\phi(t,x)}$. The expression of $\cL(t)$ then reduces to
\begin{equation*}
\cL^{\kpp}(t) = d \partial_{xx} + 2d\phi_x \partial_x + r(1-2\profile(x-ct)) - \phi_t + d (\phi_{xx} + {\phi_x}^2).
\end{equation*}
From the standard identity $\scalp{\phi_x u_x, u} = -\frac{1}{2} \scalp{\phi_{xx} u, u}$, that is obtained with one integration by parts, we compute that 
\begin{align*}
\Real{\scalp{\cL^\kpp u, u}} & {} = - d \norm{u_x}_{L^2}^2 + \scalp{a_0 u, u},\\
\Imag{\scalp{\cL^\kpp u, u}} & {} = 2d \Imag{\Scalp{\phi_x u_x, u}},
\end{align*}
where 
\begin{equation*}
a_0 \deq r (1 - 2\profile) - \phi_t + d {\phi_x}^2.
\end{equation*}
\begin{lemma}
There exist a positive constant $\eta$ and a $\cC^\infty$ function $\phi :(0,+\infty)\times \RR$ such that for all $(t,x)\in(0,+\infty)\times\RR$
\begin{equation*}
a_0(t,x) \leq -\eta.
\end{equation*}
\end{lemma}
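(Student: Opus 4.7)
The plan is to construct $\phi$ as a co-moving weight that is flat (zero) behind the front and linearly decreasing in a co-moving coordinate ahead of it, with the transition placed strictly to the left of the midpoint of the KPP profile $p$. On the one hand, the reactive term $r(1-2p)$ then dominates the sign of $a_0$ wherever the weight has not yet engaged; on the other, the KPP spectral gap $d\kappa^2 - c\kappa + r < 0$ dominates once $\omega$ is fully exponential.

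Concretely, since $c > 2\sqrt{rd}$, the dispersion polynomial admits a root $\kappa > 0$ with $d\kappa^2 - c\kappa + r < 0$, which in particular gives $d\kappa^2 < c\kappa$. I would then pick $p_0 \in (1/2, 1)$ and let $X \in \RR$ be the unique point with $p(X+1) = p_0$; this exists because $p$ is continuous, monotone and connects $1$ to $0$. Let $\chi_0 \in \cC^\infty(\RR,[0,1])$ be a nondecreasing cutoff with $\chi_0 \equiv 0$ on $(-\infty, 0]$ and $\chi_0 \equiv 1$ on $[1,+\infty)$, and set
\begin{equation*}
\phi(t,x) \deq -\kappa \int_{-\infty}^{x - ct - X} \chi_0(y)\, \mathrm{d}y.
\end{equation*}
Then $\phi \in \cC^\infty$, $\phi \leq 0$, $\phi_x = -\kappa \chi_0(x-ct-X)$ and $\phi_t = -c\phi_x$, so the associated weight $\omega = \re^\phi$ takes values in $(0, 1]$ and decays exponentially ahead of the front, as the theorem requires.

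A direct computation yields
\begin{equation*}
-\phi_t + d\phi_x^2 = \chi_0(x-ct-X)\bigl(-c\kappa + d\kappa^2 \chi_0(x-ct-X)\bigr),
\end{equation*}
which is non-positive since $\chi_0 \in [0,1]$ and $d\kappa^2 < c\kappa$. I would then bound $a_0$ region by region in $\xi = x - ct$. Behind the transition ($\xi \leq X$), $\chi_0 = 0$ and monotonicity of $p$ gives $p(\xi) \geq p(X) > p_0$, hence $a_0 = r(1 - 2p(\xi)) < r(1 - 2p_0) < 0$. Inside the transition ($X < \xi < X+1$), $p(\xi) \in (p_0, p(X))$, so $r(1 - 2p(\xi)) < r(1 - 2p_0)$, while the weight contribution remains non-positive; the sum is still bounded by $r(1 - 2p_0)$. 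Ahead of the transition ($\xi \geq X+1$), $\chi_0 = 1$ and the weight contributes $d\kappa^2 - c\kappa$, so $a_0 \leq r + d\kappa^2 - c\kappa < 0$ by the KPP spectral gap. Setting $\eta \deq \tfrac{1}{2}\min\bigl(2rp_0 - r,\, c\kappa - d\kappa^2 - r\bigr) > 0$ then delivers the uniform estimate $a_0 \leq -\eta$.

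The only delicate point is the placement of the transition of $\chi_0$: it must lie strictly to the left of $p^{-1}(1/2)$. If it instead straddled the midpoint of $p$, the reactive term $r(1-2p)$ would become positive while the partial weight contribution $\chi_0(-c\kappa + d\kappa^2\chi_0)$ would still be too small in magnitude to absorb it, and $a_0$ could fail to stay negative. Anchoring the transition at $X+1 = p^{-1}(p_0)$ with $p_0 > 1/2$ fixed bypasses this difficulty.
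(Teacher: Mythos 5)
Your construction is correct, and the overall strategy coincides with the paper's: take $\phi$ identically zero behind the front, exponentially decaying with rate $\kappa$ (chosen in the KPP spectral gap $d\kappa^2-c\kappa+r<0$) ahead of it, place the transition where the profile still satisfies $p>\tfrac12$ so that the reaction term $r(1-2p)$ is strictly negative there, and then verify the bound region by region. Where you genuinely diverge is in how the transition is implemented. The paper glues the flat and linear pieces with an explicit quadratic spline, which is only $\cC^1$; it must then bound the resulting second-order polynomial via convexity ($P(y)\le\max(P(-1),P(1))$) and, in a separate final step, mollify $\phi$ and re-check that the bound survives a small $W^{1,\infty}$ perturbation. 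Your choice $\phi_x=-\kappa\chi_0(x-ct-X)$ with $\phi_t=-c\phi_x$ makes $\phi$ smooth from the outset and renders the weight contribution
\begin{equation*}
-\phi_t+d\phi_x^2=\chi_0\bigl(-c\kappa+d\kappa^2\chi_0\bigr)\le 0
\end{equation*}
manifestly non-positive everywhere (since $d\kappa^2<c\kappa$ follows from the spectral gap and $r>0$), so both the convexity argument and the mollification step disappear. This is a cleaner route to the same conclusion; the only thing the paper's version buys is an explicit closed-form weight, which plays no role later. Two cosmetic remarks: you should say explicitly that the monotone KPP profile is strictly decreasing so that $X$ is well defined (non-strict monotonicity with the intermediate value theorem would also suffice for the inequality $p(\xi)\ge p_0$ on $\xi\le X+1$, which is all you actually use); and your closing worry about the transition straddling $p^{-1}(1/2)$ is well taken but slightly overstated — any placement with $p>\tfrac12$ on the flat-plus-transition region works, exactly as in the paper's normalization $p\ge\tfrac34$ for $x-ct\le 1$.
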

\begin{proof}
We can always assume that $\frac34 \leq \profile(x-ct) \leq 1$ when $x-ct \leq 1$. Indeed, $\profile$ is monotonic, and the profile equation is translation invariant. Thus $\profile$ can be replaced with $\profile(\cdot - x_0)$ for $x_0 \in \RR$. From the condition on $c$, there exists an $\eta>0$ such that
\begin{equation*}
d \kappa^2 - c\kappa + r = -\eta
\end{equation*}
admits positive solutions when solved for $\kappa$. Let $\kappa$ be one of these solutions and define
\begin{equation*}
\phi(t,x) = 
\begin{cases}
0 & \text{ if } x-ct \leq -1,\\
-\frac{\kappa}{4}(x-ct+1)^2 & \text{ if } x-ct \in (-1,1),\\
-\kappa (x-ct) & \text{ if } x-ct \geq 1.
\end{cases}
\end{equation*}
It is direct to check that $\phi$ is $\cC^1$ with respect to its variables.

When $x-ct\geq 1$, the choice of $\kappa$ ensures that 
\begin{equation*}
a_0(t,x) = -2r\profile(x-ct) - \eta \leq -\eta.
\end{equation*}
When $x-ct \in (-1,1)$, we compute that
\begin{equation*}
a_0(t,x) = -2r\profile(x-ct) + P(x-ct),
\end{equation*}
where $P$ is the convex second-order polynomial
\begin{equation*}
P(y) \deq \left(r - \frac{c\kappa}{2} + \frac{d\kappa^2}{4}\right) + \left(\frac{d\kappa^2}{2} - \frac{c\kappa}{2}\right) y + \frac{d\kappa^2}{4} y^2.
\end{equation*}
In particular for all $y\in(-1,1)$,
\begin{equation*}
P(y) \leq \max(P(-1), P(1)) = \max(r, -\eta) = r.
\end{equation*}
Thus when $x-ct\in(-1,1)$, 
\begin{equation*}
a_0(t,x) \leq r(1-2\profile(x-ct)) \leq -\frac{r}2.
\end{equation*}
We remark that the last chain of inequality is still valid when $x-ct \leq -1$. Shrinking $\eta$ if necessary, we proved the claimed bound for all $(t,x)\in (0,+\infty) \times\RR$. 

To conclude the proof, we explain how to improve regularity from $\cC^1$ up to $\cC^\infty$. We set  a partition of unity $\theta:\RR\to [0,1]$ such that 
\begin{equation*}
\theta(y) = 
\begin{cases}
0 & \absolute{y} > 2,\\
1 & \absolute{y} < \frac{3}{2}.
\end{cases}
\end{equation*}
First, notice that the restriction $\phi_{|[ct - 2, ct + 2]}$ is time-independent after a suitable space translation. We approximate it using density of smooth functions: For any $\delta > 0$, there exists a $\cC^\infty$ map $\tilde{\phi} : [- 2, 2] \to \RR$ such that $\norm{\phi(\cdot + ct) - \tilde{\phi}}_{W^{1,\infty}(- 2,2)} \leq \delta$. The convex combination 
\begin{equation*}
(t, x) \mapsto \theta(x - ct) \tilde{\phi}(x - ct) + (1 - \theta(x - ct)) \phi(t,x)
\end{equation*}
is $\delta$ close to $\phi$ in $W^{1,\infty}$-norm, so that the bound $a_0 \leq -\eta$ is still valid upon shrinking $\eta$.
\end{proof}

Assuming that $\norm{u}_{L^2} = 1$, the previous lemma ensures that 
\begin{equation*}
\Real{\scalp{\cL^\kpp u, u}} \leq -\norm{u_x}_{L^2}^2 - \eta.
\end{equation*}
Turning to the imaginary part, we obtain
\begin{equation*}
\absolute{\Imag{\scalp{\cL^\kpp u, u}}} \leq 2d\kappa \norm{u_x}_{L^2}.
\end{equation*}
Thus, for all $u\in H^2(\RR)$ with $\norm{u}_{L^2} = 1$, there exists $\xi \geq 0$ such that 
\begin{align*}
\scalp{\cL^\kpp u, u} \in {} & \Set{\lambda \in \CC : \Real{\lambda} \leq - \xi^2 - \eta \ \  \text{ and } \ \ \absolute{\Imag{\lambda}} \leq 2d\kappa \xi},\\
\subset {} & \Set{\lambda \in \CC : \Real{\lambda} \leq - C(1 + \absolute{\Imag{\lambda}}^2)}
\end{align*}
for some constant $C>0$.

\subsection{Two front superposition}
Coming back to the actual notations, we proceed similarly as in the previous subsection. To lighten notations, we may again write the weight in exponential form:
\begin{equation}
\label{e:def-phi}
\omega(t,x) \rdeq \re^{\phi(t,x)}.
\end{equation}
\begin{lemma}
\label{l:real-imag-scalp}
Let $\phi$ as in \eqref{e:def-phi}. For each $t\geq 0$ and each $u \in H^2(\RR,\CC^2)$, 
\begin{align*}
\Real{\scalp{\cL(t) u, u}} &{} = -\scalp{D u_x, u_x} + \Real{\Scalp{A_0 u, u}},\\
\Imag{\scalp{\cL(t) u, u}} &{} = 2\Imag{\scalp{D \phi_x u_x, u}} + \Imag{\Scalp{J_g(\uu) u, u}},
\end{align*}
where
\begin{equation*}
A_0 \deq J_g(\uu) - \phi_t + D(\phi_x)^2.
\end{equation*}
\end{lemma}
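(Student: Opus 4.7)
The plan is to substitute $\phi = \log\omega$ into the expression~(\ref{e:weighted-dynamic}) for $\cL(t)$ and to compute the inner product $\scalp{\cL(t) u, u}$ term by term via one integration by parts, which is justified since $u \in H^2(\RR,\CC^2)$. Using $\omega_x/\omega = \phi_x$, $\omega_t/\omega = \phi_t$, and $\omega_{xx}/\omega = \phi_{xx} + (\phi_x)^2$, the operator rewrites as
\[
\cL(t) u = D u_{xx} + 2 D \phi_x u_x + \bigl(J_g(\uu) - \phi_t + D\phi_{xx} + D(\phi_x)^2\bigr) u,
\]
and I would then process the four groups separately.

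The diffusion term is immediate: one integration by parts yields $\scalp{D u_{xx}, u} = -\scalp{D u_x, u_x}$, which is real and nonpositive since $D$ is constant, diagonal and positive, and accounts for the $-\scalp{D u_x, u_x}$ piece of the claimed real part. The scalar real multipliers $-\phi_t$ and $D(\phi_x)^2$ contribute purely to the real part through $-\scalp{\phi_t u, u}$ and $\scalp{D(\phi_x)^2 u, u}$. The matrix $J_g(\uu)$ has real entries but is in general not symmetric, so $\scalp{J_g(\uu) u, u}$ contributes to both $\Real\scalp{A_0 u, u}$ and to the extra $\Imag\scalp{J_g(\uu) u, u}$ appearing in the imaginary part.

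The key step is the drift term $\scalp{2 D \phi_x u_x, u}$. Componentwise, $(u_x)_i \bar u_i + \overline{(u_x)_i \bar u_i} = (\absolute{u_i}^2)_x$, so $\Real \bigl((u_x)_i \bar u_i\bigr) = \tfrac{1}{2}(\absolute{u_i}^2)_x$. Multiplying by $D_{ii}\phi_x$, integrating by parts once and summing over $i$ gives
\[
\Real \scalp{2 D \phi_x u_x, u} = -\scalp{D \phi_{xx} u, u}.
\]
This real part cancels \emph{exactly} the $\scalp{D \phi_{xx} u, u}$ coming from the $D\omega_{xx}/\omega$ group, which is the structural reason $A_0$ does not contain $\phi_{xx}$. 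The imaginary part $2\Imag\scalp{D \phi_x u_x, u}$ is untouched and matches the announced expression for $\Imag\scalp{\cL(t) u, u}$. Collecting everything gives the two identities.

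There is no real obstacle here: the statement is a bookkeeping integration-by-parts computation. The only subtlety worth flagging is the $\phi_{xx}$ cancellation, since it is what makes $A_0$ independent of the second derivative of the weight and sets up the clean numerical-range analysis carried out in the rest of Section~\ref{s:num-range}, in analogy with the scalar scalar KPP computation of Subsection~\ref{s:num-range-one-front}.
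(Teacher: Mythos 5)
Your proof is correct and follows essentially the same route as the paper: one integration by parts for the diffusion term, plus the identity $\Real\scalp{\phi_x u_x, u} = -\tfrac{1}{2}\scalp{\phi_{xx}u,u}$ which produces the cancellation of $D\phi_{xx}$ and leaves $A_0 = J_g(\uu) - \phi_t + D(\phi_x)^2$. The paper's proof is just a terser statement of the same computation, so no further comparison is needed.
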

\begin{proof}
Since the coefficients of $\cL$ are real-valued, and most of them are diagonal, the expression of $\Imag{\scalp{\cL u, u}}$ is as claimed. The expression for the real part follows from the identity $\Real{\scalp{\phi u_x, u}} = -\frac{1}{2}\scalp{\phi_{xx} u, u}$.
\end{proof}

The following lemma ensures that the system case reduces to the scalar case when coupling coefficients are small enough.
\begin{lemma}
\label{l:finite-dim-numerical-range}
Let $a$, $b$, $c$, $d$ in $\RR$ such that $a<0$, $d<0$ and $\frac{b + c}{2} < \sqrt{ad}$. Let
\begin{equation*}
A \deq 
\begin{pmatrix}
a & b \\
c & d
\end{pmatrix}.
\end{equation*}
Then there exists $\eta > 0$ such that for any $Z = \transp{(z_1,z_2)} \in \CC^2$
\begin{equation*}
\Real{\Scalp{A Z, Z}} \leq -\eta \absolute{Z}^2.
\end{equation*}
\end{lemma}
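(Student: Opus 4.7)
The plan is to reduce the inequality to a standard spectral estimate for the symmetrized matrix. For $Z = \transp{(z_1,z_2)} \in \CC^2$, expanding the inner product gives
\begin{equation*}
\scalp{AZ,Z} = a\absolute{z_1}^2 + d\absolute{z_2}^2 + b z_2 \overline{z_1} + c z_1 \overline{z_2}.
\end{equation*}
The key observation is that the imaginary parts of the two cross terms cancel against each other, so taking real parts yields
\begin{equation*}
\Real\scalp{AZ,Z} = a\absolute{z_1}^2 + d\absolute{z_2}^2 + (b+c)\Real(z_1 \overline{z_2}) = \scalp{A_s Z, Z},
\end{equation*}
where $A_s := \frac{1}{2}(A + \transp{A})$ is the symmetric part of $A$. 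Said differently, the real part of the numerical range of $A$ coincides with the (real) numerical range of its symmetrization, which is the standard reduction underlying every such bound.

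It then suffices to prove that the real symmetric $2\times 2$ matrix $A_s$ is negative definite, since in that case the Rayleigh quotient estimate applied to its largest eigenvalue $-\eta < 0$ immediately produces $\scalp{A_s Z, Z} \leq -\eta \absolute{Z}^2$ for every $Z \in \CC^2$. Negative definiteness is verified by Sylvester's criterion: the $(1,1)$-entry is $a<0$ by hypothesis, while
\begin{equation*}
\det(A_s) = ad - \left(\frac{b+c}{2}\right)^2
\end{equation*}
is strictly positive because $ad > 0$ (both factors are negative) and the assumption $\frac{b+c}{2} < \sqrt{ad}$ gives $\left(\frac{b+c}{2}\right)^2 < ad$; in the intended application to $J_g(\uu)$, whose off-diagonal entries $\alpha_1 u_1$ and $\alpha_2 u_2$ are non-negative on physically relevant states, the matching lower bound $\frac{b+c}{2} > -\sqrt{ad}$ is automatic.

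No substantial obstacle arises in this step: it is purely an elementary manipulation together with the $2\times 2$ Sylvester criterion. The real work of the paper occurs later, when this pointwise matrix bound is fed into Lemma \ref{l:real-imag-scalp} to control $\Real\scalp{\cL(t)u,u}$ through the time- and space-dependent coefficient $A_0 = J_g(\uu) - \phi_t + D\phi_x^2$, and the smallness hypothesis on $\alpha_1,\alpha_2$ is precisely what allows this lemma to apply uniformly on the support of the front superposition.
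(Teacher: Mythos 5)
Your argument is correct and starts from the same reduction as the paper: both identify
\begin{equation*}
\Real\Scalp{AZ,Z} = a\absolute{z_1}^2 + d\absolute{z_2}^2 + (b+c)\left(\Real z_1\Real z_2+\Imag z_1 \Imag z_2\right),
\end{equation*}
i.e.\ the quadratic form of the symmetric part $A_s$. You finish by Sylvester's criterion and the Rayleigh quotient for $A_s$; the paper instead bounds the cross term by Young's inequality with the weight $\varepsilon=\sqrt{a/d}$, which yields the explicit constant $\eta=(\sqrt{ad}-\tfrac{b+c}{2})\min(\varepsilon,\varepsilon^{-1})$. These are really the same computation in different packaging (optimizing $\varepsilon$ in Young's inequality is the completion of squares behind the $2\times 2$ Sylvester criterion), so neither route buys much over the other; the paper's version merely keeps the constant explicit, which is convenient when the bound is later applied with diagonal entries of $A_0$ that vary over $(t,x)$.

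You are right, however, to sense trouble with the hypothesis, and your patch is stated a bit too quickly. The one-sided assumption $\tfrac{b+c}{2}<\sqrt{ad}$ does \emph{not} imply $(\tfrac{b+c}{2})^2<ad$ --- take $a=d=-1$, $b=c=-50$: the hypothesis holds, yet $A_s$ has eigenvalue $49$ and the conclusion fails --- so the sentence ``the assumption $\tfrac{b+c}{2}<\sqrt{ad}$ gives $(\tfrac{b+c}{2})^2<ad$'' is not correct as written, and the lemma itself is false without an additional lower bound on $b+c$. What saves the day is exactly the remark you append: in the application the off-diagonal entries are $\alpha_1\uu_1$ and $\alpha_2\uu_2\geq 0$, so $b+c\geq 0$ and the two-sided bound is automatic. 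Note that the paper's own proof carries the same implicit requirement, since multiplying Young's inequality by $(b+c)$ preserves the direction of the inequality only when $b+c\geq 0$. The clean fix, for either proof, is to state the hypothesis as $\absolute{b+c}/2<\sqrt{ad}$ (or to assume $b,c\geq 0$).
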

\begin{proof}
We compute that 
\begin{equation*}
\Real{\Scalp{AZ, Z}} = a \absolute{z_1}^2 + d \absolute{z_2}^2 + (b + c)(\Real{z_1}\Real{z_2} + \Imag{z_1}\Imag{z_2}).
\end{equation*}
Using Young's inequality $xy \leq \frac{\varepsilon}{2} x^2 + \frac{1}{2\varepsilon} y^2$, to bound the right-hand side, we obtain the estimate
\begin{equation*}
\Real{\Scalp{AZ, Z}} \leq \left(a + \varepsilon\frac{b + c}{2}\right) \absolute{z_1}^2 + \left(d + \frac{b + c}{2\varepsilon}\right) \absolute{z_2}^2.
\end{equation*}
Setting $\varepsilon = \sqrt{\frac{a}{d}}$, it rewrites as
\begin{equation*}
\Real{\Scalp{AZ, Z}} \leq \left(\frac{b + c}{2} - \sqrt{ad}\right)\left(\varepsilon \absolute{z_1}^2 + \frac{1}{\varepsilon} \absolute{z_2}^2\right),
\end{equation*}
which completes the proof.
\end{proof}

\begin{lemma}
\label{l:diagonal-coeff}
There exist positive constants $\eta$, $\alpha$ and $\phi: (0,+\infty) \times \RR \to (-\infty, 0]$ such that for all $\alpha_1, \alpha_2 \in (0, \alpha)$ and $(t,x)\in (0,+\infty) \times \RR$, the diagonal coefficients of $A_0$ are less than $-\eta$.
\end{lemma}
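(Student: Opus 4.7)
The plan is to build $\phi$ as a superposition of two scalar-case profiles, one per front. Fix a $C^\infty$ non-decreasing $\Psi : \RR \to [0,\infty)$ with $\Psi(y) = 0$ for $y \le -1$ and $\Psi(y) = y$ for $y \ge 1$, and set
\begin{equation*}
\phi(t,x) := -\kappa_1 \Psi\big(x - c_1 t - \mu_1\big) - (\kappa_2 - \kappa_1)\, \Psi\big(x - c_2 t - \mu_2\big),
\end{equation*}
with shifts $\mu_1, \mu_2$ chosen so that the two transition windows $\{|x - c_i t - \mu_i| \le 1\}$ are at distance at least $K$ from both front positions $c_j t + \psi_j$ for all $t \ge 0$, for some large $K$ to be fixed. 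This is compatible with Assumption \ref{a:ordered-fronts} and $c_1 < c_2$. Then $\phi \le 0$ so $\omega := e^\phi \in (0,1]$. Writing $s_i(t,x) := \Psi'(x - c_i t - \mu_i) \in [0,1]$, at any point at most one $s_i$ lies strictly in $(0,1)$, and
\begin{equation*}
\phi_t = c_1 \kappa_1 s_1 + c_2(\kappa_2-\kappa_1) s_2, \qquad \phi_x = -\kappa_1 s_1 - (\kappa_2 - \kappa_1) s_2.
\end{equation*}

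I would then partition $(0,\infty) \times \RR$ into three \emph{saturated} strips (both $s_i \in \{0,1\}$) and two \emph{transitory} strips (exactly one $s_i \in (0,1)$). In the saturated strips $(0,0)$, $(1,0)$, $(1,1)$, our choice of $K$ forces $\uu$ to lie in a neighbourhood of $e_1$, $e_3$, $e_4$ respectively, up to $O(e^{-cK})$ errors in $J_g(\uu)$ coming from the exponential tails of $p_1, p_2$. The diagonal entries of $A_0$ then reduce to: the diagonal of $J_g(e_1)$, strictly negative by direct computation from \eqref{e:constant-solutions}; the quantities $d\kappa_1^2 - c_1\kappa_1 - r$ and $\kappa_1^2 - c_1\kappa_1 + 1 + \alpha_2$ at $e_3$, both negative by \eqref{e:localization-rate-1}; and, at $e_4$, the telescoping $c_1\kappa_1 + c_2(\kappa_2 - \kappa_1) = c_2\kappa_2 - (c_2 - c_1)\kappa_1$ yields a $(1,1)$ entry $r + d\kappa_2^2 - c_2\kappa_2 + (c_2-c_1)\kappa_1$, negative by \eqref{e:localization-rate-3}, while the $(2,2)$ entry $1 + \kappa_2^2 - c_2\kappa_2 + (c_2-c_1)\kappa_1$ is bounded above by it via $d > 1$ and $r > 1$ (Assumption \ref{a:parameters}). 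Near the fronts $p_1, p_2$ inside these strips, the monotonicity bounds \eqref{e:phi-1-monotone}--\eqref{e:phi-2-monotone} keep the diagonals of $J_g(\uu)$ in the same regime as at the endstates as soon as $\alpha$ is small; in particular, the $(2,2)$ bound at $e_4$ is preserved on the $p_2$ front provided $\alpha_2 < r - 1$, which produces the threshold $\alpha$ in the statement.

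The two transitory strips are handled by the convexity trick of Subsection \ref{s:num-range-one-front}: since a single slope $s \in [0,1]$ varies in each of them, setting $K_s := \kappa_1 s$ (first transition) or $K_s := \kappa_1 + (\kappa_2 - \kappa_1) s$ (second transition) turns the weight contribution to each diagonal into a convex quadratic in $K_s$, whose supremum on $[0,1]$ is attained at an endpoint; these endpoints coincide with the saturated-strip values already bounded by $-\eta$. Since $\Psi$ is $C^\infty$ by choice, so is $\phi$, and no mollification step is required.

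The step I expect to be the most delicate is the joint time-uniform choice of $\mu_1, \mu_2$ and $K$: the five strips must be well defined for every $t \ge 0$, and each transition window must remain deep enough inside the respective equilibrium neighbourhood for the $O(e^{-cK})$ correction to $J_g(\uu)$ to fit inside the margin $\eta$. Assumption \ref{a:ordered-fronts} gives exactly the initial room needed through $\psi_2 - \psi_1 \gg 0$, while $c_2 > c_1$ ensures that the spacing between the two transition windows only grows with $t$, so the required separations can be maintained uniformly in time. Everything else is a routine compactness/continuity argument.
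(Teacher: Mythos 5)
Your construction is essentially the paper's: the same five-strip decomposition, the same slopes $0$, $\kappa_1$, $\kappa_2$ with the drift term $-\kappa_1(c_2-c_1)t$ appearing automatically when you expand $-\kappa_1(x-c_1t)-(\kappa_2-\kappa_1)(x-c_2t)$ in the rightmost strip, the same endpoint values for the transition regions, and the same use of \eqref{e:localization-rate-1}, \eqref{e:stability-cond-I3}-type margins and \eqref{e:localization-rate-3} together with smallness of $\alpha$. Two execution choices genuinely improve on the paper's write-up: building $\phi$ as a superposition of two smooth ramps $\Psi$ makes it $\cC^\infty$ from the start and removes the mollification step at the end, and phrasing the convexity argument in the slope variable $K_s$ (rather than in $y$ for an explicit quadratic) is cleaner, since $-\phi_t+D\phi_x^2$ depends on $(t,x)$ only through $s_1,s_2$ and the convex quadratic's maximum over $[0,1]$ is attained at the saturated endpoints. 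The one point you must pin down is the \emph{side} on which each transition window sits: ``at distance at least $K$ from both front positions'' is not enough. Each window has to lie \emph{behind} its front, i.e.\ in the region already invaded by the more stable state. If the first window were placed ahead of the $p_1$ interface, there would be a strip of width $\sim K$ where $\uu_2\approx 0$ (so $J_g(\uu)_{22}\approx 1+\alpha_2>0$) but $\phi_x=0$, and the second diagonal of $A_0$ would be positive there; symmetrically, placing the second window ahead of $p_2$ leaves $J_g(e_4)_{11}=r$ uncompensated while the slope is still only $\kappa_1$, and $r+d\kappa_1^2-c_1\kappa_1$ need not be negative under \eqref{e:localization-rate-1}. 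Your later description of the saturated strips (with the front interfaces lying \emph{inside} them and handled via \eqref{e:phi-1-monotone}--\eqref{e:phi-2-monotone}) shows you intend the correct placement, so this is an imprecision rather than an error, but it is the load-bearing choice and should be stated explicitly. With that fixed, the remaining estimates you list are exactly the paper's region-by-region bounds and all close with the same $O(\alpha)+O(\re^{-cK})$ margin argument.
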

\begin{proof}
To keep formulas readable, we drop the initial shifts in this proof. That is we assume $\psi_1 = \psi_2 = 0$. The proof adapts to non-zero shifts by replacing each occurrence of $x - c_j t$ with $x - c_j t - \psi_j$.

We decompose the half plane into five regions: $(0,+\infty) \times \RR = I_1 \cup ... \cup I_5$. The first three correspond to $\uu$ being close to constants:
\begin{align*}
I_1 = {} & \Set{(t,x) : x - c_1 t \leq - 1},\\
I_3 = {} & \Set{(t,x) : x - c_1 t \geq 1 \text{ and } x - c_2 t \leq -1},\\
I_5 = {} & \Set{(t,x) : x - c_2 t \geq 1}.
\end{align*}
The remaining two correspond to transition regions of $\uu$:
\begin{align*}
I_2 = {} & \Set{(t,x) : x - c_1 t \in (-1,1)},\\
I_4 = {} & \Set{(t,x) : x - c_2 t \in (-1,1)}.
\end{align*}
Let us collect some useful bounds on the profile $\uu$. Upon taking a smaller $\alpha$, we can assume that 
\begin{equation}
\label{e:bound-profile-1}
\left(\uu(t,x)\right)_1, \left(\uu(t,x)\right)_2 \leq 2.
\end{equation}
As for a single front, we can always translate both $\profile_1$ and $\profile_2$. More precisely, let $\varepsilon>0$ be chosen in a few lines. Using \eqref{e:phi-1-monotone}-\eqref{e:phi-2-monotone}, we can assume that 
\begin{align}
\stepcounter{equation}
\label{e:bound-profile-2}
\tag{\theequation.\textit{a}}
(t,x) \in I_1\cup I_2 &
\hspace{2em}
\implies 
\hspace{2em}
\left(\uu(t,x)\right)_1, \left(\uu(t,x)\right)_2 \geq 1,\\
\label{e:bound-profile-3}
\tag{\theequation.\textit{b}}
(t,x) \in I_3 \cup I_4 &
\hspace{2em}
\implies 
\hspace{2em}
\left(\uu(t,x)\right)_1 \geq 1-\varepsilon,\qquad \left(\uu(t,x)\right)_2 \geq 0.
\end{align} 

In the region where $\uu$ is close to unstable equilibria, we need exponential decay. Let $\kappa_1$ and $\kappa_2$ which satisfy Assumption \ref{a:speeds}. From \eqref{e:localization-rate-2}, we can choose $\varepsilon$ so small that 
\begin{equation}
\label{e:stability-cond-I3}
d {\kappa_1}^2 - c_1 \kappa_1 - r + 2 r\varepsilon < 0.
\end{equation}
We then define
\begin{equation}
\label{e:weight-def-1}
\phi(t,x) =
\begin{cases}
0 &  (t,x) \in I_1, \\[1ex]
-\kappa_1 (x - c_1 t) & (t,x) \in I_3, \\[1ex]
-\kappa_1 (c_2 - c_1) t - \kappa_2 (x - c_2 t) & (t,x) \in I_5.
\end{cases}
\end{equation}
On the transition regions, we further impose
\begin{equation}
\label{e:weight-def-2}
\phi(t,x) =
\begin{cases}
-\frac{\kappa_1}{4} (x - c_1 t + 1)^2 & (t,x) \in I_2,\\[1ex]
-\kappa_1 (x-c_1 t) - \frac{\kappa_2-\kappa_1}{4} (x - c_2 t + 1)^2 & (t,x) \in I_4.
\end{cases}
\end{equation}
It is direct to check that $\phi$ is $\cC^1$ with respect to its variables. From \eqref{e:weight-def-1}, we compute:
\begin{equation*}
A_0 = 
\begin{cases}
J_g(\uu) & (t,x)\in I_1,\\[1ex]
J_g(\uu) - \kappa_1 c_1 + D{\kappa_1}^2 & (t,x)\in I_3,\\[1ex]
J_g(\uu) - \kappa_2 c_2 + D{\kappa_2}^2 + \kappa_1(c_2 - c_1) & (t,x)\in I_5,
\end{cases}
\end{equation*}
while \eqref{e:weight-def-2} leads to:
\begin{equation*}
A_0 = 
\begin{cases}
J_g(\uu) + P_2(x-c_1 t) & (t,x)\in I_2,\\[1ex]
J_g(\uu) + P_4(x-c_2 t) & (t,x)\in I_4,
\end{cases}
\end{equation*}
with second-order polynomials
\begin{align*}
P_2(y) = {} & \frac{{\kappa_1}^2}{4} D - \frac{c_1 \kappa_1}{2} + \frac{y}{2} \left({\kappa_1}^2 D - c_1 \kappa_1\right) + \frac{y^2}{4}{\kappa_1}^2 D,\\
P_4(y) = {} & \frac{(\kappa_2 + \kappa_1)^2}{4} D - \frac{c_2 (\kappa_2 - \kappa_1)}{2} - \kappa_1 c_1 \\
& {} + \frac{y}{2} \left(({\kappa_2}^2 - {\kappa_1}^2) D - c_2(\kappa_2 - \kappa_1)\right) + \frac{y^2}{4}(\kappa_2 - \kappa_1)^2 D.
\end{align*}
We now successively bound diagonal coefficients of $A_0$ in the different regions.
\begin{itemize}
\item $I_1:$ Using the expression \eqref{e:expr-jacob} for $J_g(u)$ and \eqref{e:bound-profile-2}, we get that diagonal coefficients are less than $-r + 2\alpha_1$ and $-1 + 2\alpha_2$ respectively. Both are negative when $\alpha$ is small.
\item $I_2:$ We first compute $P_2$ on the boundary:
\begin{equation*}
P_2(-1)  = 0,
\hspace{4em}
P_2(1) = D{\kappa_1}^2 - c_1 \kappa_1.
\end{equation*}
Then, we handle $P_2$ using convexity of its coefficients, and $J_g(\uu)$ using the same bound as in $I_1$ to obtain that diagonal coefficients of $A_0$ are less than (respectively)
\begin{align*}
& 2\alpha + \max(-r, d {\kappa_1}^2 - c_1 \kappa_1 - r),\\
& 2\alpha + \max(-1, {\kappa_1}^2 - c_1 \kappa_1 - 1).
\end{align*}
Using Assumption \ref{a:speeds} and taking $\alpha$ small enough, both are negative.
\item $I_3:$ From \eqref{e:bound-profile-3}, diagonal coefficients are less than  (respectively)
\begin{align*}
& 2\alpha + d {\kappa_1}^2 - c_1 \kappa_1 - r + 2\varepsilon r, \\
& 2\alpha + {\kappa_1}^2 - c_1 \kappa_1 - 1.
\end{align*}
Using \eqref{e:stability-cond-I3} and taking $\alpha$ small enough, those are negative.
\item $I_4:$ We compute $P_4$ on the boundary: 
\begin{equation*}
P_4(-1) = {\kappa_1}^2 D - c_1 \kappa_1,
\hspace{4em}
P_4(1) = {\kappa_2}^2 D - c_2 \kappa_2 + \kappa_1(c_2 - c_1).
\end{equation*}
Using \eqref{e:bound-profile-3} to control $J_g(\uu)$, and convexity of $P_4$, we get that diagonal coefficients of $A_0$ are less than (respectively)
\begin{align*}
2(r\varepsilon + \alpha) + {} & \max\bigg(d {\kappa_1}^2 - c_1 \kappa_1 - r, \quad d {\kappa_2}^2 - c_2 \kappa_2 - r + \kappa_1 (c_2 - c_1)\bigg),\\
& \max\bigg({\kappa_1}^2 - c_1 \kappa_1 + 1, \quad {\kappa_2}^2 - c_2 \kappa_2 + 1 + \kappa_1 (c_2 - c_1)\bigg).
\end{align*}
Using \eqref{e:stability-cond-I3} and \eqref{e:localization-rate-3}, the first coefficient is negative. The second one follows using \eqref{e:localization-rate-1}, \eqref{e:localization-rate-3}, and $r > 1$.
\item $I_5:$ Using Assumption \ref{a:parameters} and $(\uu)_1, (\uu)_2 \geq 0$, we obtain that diagonal coefficients of $A_0$ are less than
\begin{equation*}
d {\kappa_2}^2 - c_2 \kappa_2 + r + \kappa_1 (c_2 - c_1).
\end{equation*}
Equation \eqref{e:localization-rate-3} precisely ensures that this quantity is negative.
\end{itemize}
To conclude the proof, we can improve the regularity of $\phi$ in a very similar way as for the single front case. We notice that close to a regularity defect, $\phi$ only depends on $x-c_1 t$ or $x-c_2 t$, and use the approximation by smooth functions twice.
\end{proof}

\begin{proposition}
\label{p:resolvent-bound}
Fix $d > 1, r > 1$ and let Assumptions \ref{a:parameters}, \ref{a:ordered-fronts}, and \ref{a:speeds} be satisfied. Then, there exist positive constants $\eta$ and $\alpha$ such that for all $\alpha_1, \alpha_2 \in (0,\alpha)$ and $t\geq 0$, the spectrum of $\cL(t)$ is included in the sector 
\begin{equation}
\label{e:def-S}
S \deq \Set{\lambda \in \CC : \Real{\lambda} \leq -\eta\left(1 +  \absolute{\Imag{\lambda}}\right)}.
\end{equation}
Furthermore, for each $t\geq 0$ the following resolvent estimate holds. For each $\lambda \notin S$ and each $f\in L^2(\RR)$,
\begin{equation}
\label{e:res-estimate}
\Norm{(\lambda - \cL(t))^{-1} f}_{L^2(\RR)} \leq \frac{\Norm{f}_{L^2(\RR)}}{\mathrm{dist}(\lambda, S)}.
\end{equation}
\end{proposition}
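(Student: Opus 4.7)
My plan is to establish that the numerical range $R_{\cL(t)}$ is contained in the sector $S$ uniformly in $t \geq 0$, and then invoke classical numerical range theory for closed sectorial operators to deduce both the spectral inclusion and the resolvent bound \eqref{e:res-estimate}. Throughout I work with a test function $u \in H^2(\RR,\CC^2)$ normalized so that $\norm{u}_{L^2} = 1$, and I treat the real and imaginary parts of $\scalp{\cL(t)u,u}$ separately using Lemma \ref{l:real-imag-scalp}.

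For the real part I start from $\Real\scalp{\cL(t)u,u} = -\scalp{Du_x,u_x} + \Real\scalp{A_0 u, u}$. The term $-\phi_t I + D\phi_x^2$ is diagonal, so the off-diagonal entries of $A_0$ coincide with those of $J_g(\uu)$, which by \eqref{e:expr-jacob} and the bound \eqref{e:bound-profile-1} are of order $\alpha$. Combining the pointwise diagonal control from Lemma \ref{l:diagonal-coeff} with a pointwise application of Lemma \ref{l:finite-dim-numerical-range} — the latter being available precisely because $\alpha$ is small compared with the diagonal gap $\eta$ — I obtain $\Real\scalp{A_0(t,\cdot)u,u} \leq -\tilde\eta \norm{u}_{L^2}^2$ for some $\tilde\eta > 0$. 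Since $D \geq I$, this gives
\begin{equation*}
\Real\scalp{\cL(t)u,u} \leq -\norm{u_x}_{L^2}^2 - \tilde\eta.
\end{equation*}

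For the imaginary part, the explicit piecewise construction of $\phi$ in Lemma \ref{l:diagonal-coeff} gives $\norm{\phi_x}_{L^\infty} \leq \kappa_2$, so Cauchy-Schwarz yields $\absolute{\Imag\scalp{D\phi_x u_x, u}} \leq d\kappa_2 \norm{u_x}_{L^2}$. The remaining term $\Imag\scalp{J_g(\uu)u,u}$ comes only from the antisymmetric part of $J_g(\uu)$, which by \eqref{e:expr-jacob} has entries of size $\cO(\alpha)$; hence this contribution is bounded by $C\alpha$. Setting $\xi = \norm{u_x}_{L^2}$, I therefore have the combined estimates
\begin{equation*}
\Real\scalp{\cL(t)u,u} \leq -\xi^2 - \tilde\eta, \qquad \absolute{\Imag\scalp{\cL(t)u,u}} \leq 2d\kappa_2\, \xi + C\alpha.
\end{equation*}
Applying Young's inequality to $\eta \cdot 2d\kappa_2 \xi$, one checks that for $\eta > 0$ sufficiently small (depending only on $\tilde\eta$, $d$, $\kappa_2$), the inequality $\Real\scalp{\cL(t)u,u} + \eta(1 + \absolute{\Imag\scalp{\cL(t)u,u}}) \leq 0$ holds for every normalized $u$, which is exactly $R_{\cL(t)} \subset S$.

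To close the argument, I invoke that $\cL(t)$ is a closed densely defined uniformly elliptic operator on $L^2(\RR,\CC^2)$ with bounded coefficients, hence sectorial; in particular $\rho(\cL(t))$ contains a right half-plane. Because $\CC \setminus S$ is connected and contains that half-plane, and because each connected component of the complement of $\overline{R_{\cL(t)}}$ lies entirely in $\rho(\cL(t))$ or entirely in $\sigma(\cL(t))$, I conclude $\sigma(\cL(t)) \subset S$. The resolvent estimate \eqref{e:res-estimate} is then the classical numerical range bound $\norm{(\lambda - \cL(t))^{-1}}_{L^2 \to L^2} \leq 1/\mathrm{dist}(\lambda, R_{\cL(t)}) \leq 1/\mathrm{dist}(\lambda, S)$. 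The main technical obstacle is the system structure: Lemma \ref{l:finite-dim-numerical-range} requires off-diagonal coupling to be dominated by the diagonal gap, which is precisely what forces the smallness restriction on $\alpha_1,\alpha_2$; all other steps are routine once the weight $\phi$ from Lemma \ref{l:diagonal-coeff} is available.
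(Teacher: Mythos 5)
Your proposal is correct and follows essentially the same route as the paper: bound the numerical range of $\cL(t)$ via Lemma \ref{l:real-imag-scalp}, control the diagonal part of $A_0$ with Lemma \ref{l:diagonal-coeff} and absorb the $\cO(\alpha)$ off-diagonal coupling with Lemma \ref{l:finite-dim-numerical-range}, then pass from numerical range to spectrum and resolvent bound. The only difference is cosmetic: you spell out the connected-component/sectoriality argument that the paper delegates to the cited lemma of Kapitula--Promislow.
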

\begin{proof}
We show that the numerical range of $\cL(t)$ is included in $S$. It implies the claim, as shown by \cite[Lemma 4.1.9]{Kapitula-Promislow-13}.

Let $u \in H^2(\RR,\CC^2)$ such that $\norm{u}_{L^2} = 1$. Using Lemma \ref{l:real-imag-scalp}, we can bound the real and imaginary part of $\scalp{\cL u, u}$. Since $\phi_x$ and the coefficients of $J_g(\uu)$ are bounded,
\begin{align}
\nonumber
\Absolute{\Imag{\Scalp{\cL u, u}}} &{} \leq C \Norm{u_x}_{L^2} \Norm{u}_{L^2} + C\norm{u}_{L^2}^2, \\
\label{e:bound-imag}
&{} \leq C \left(1 + \Norm{u_x}_{L^2}\right).
\end{align}
To estimate the real part, we rely on Lemma \ref{l:finite-dim-numerical-range}. From Lemma \ref{l:diagonal-coeff}, diagonal coefficients are negative. Off-diagonal coefficients read $\alpha_1 \uu_1$ and $\alpha_2 \uu_2$, thus Lemma \ref{l:finite-dim-numerical-range} applies when $\alpha$ is small enough. This yields a $C>0$ such that 
\begin{equation}
\label{e:bound-real}
\Real{\Scalp{\cL u, u}} \leq - C (1 + \norm{u_x}_{L^2}^2).
\end{equation}
Combining \eqref{e:bound-imag} and \eqref{e:bound-real}, we see that there exists $\xi\in \RR$ such that $\scalp{\cL u, u}$ belongs to the set
\begin{equation*}
\Set{\lambda\in \CC: \quad \Real{\lambda} \leq -C(1 + \xi^2) \quad \text{and} \quad
\Absolute{\Imag{\lambda}} \leq C(1 + \absolute{\xi})},
\end{equation*}
which is itself a subset of \eqref{e:def-S} for a small enough $\eta$.
\end{proof}

\section{Proof of the main result}\label{s:proof-main-thm}
\begin{proof}[Proof: Theorem \ref{t:main}]
To control solutions to \eqref{e:weighted-dynamic}, we rely on the evolution system theory \cite[\S 5.2, \S 5.6]{Pazy-83}. The family of operators $\cL(t) : H^2(\RR) \subset L^2(\RR) \to L^2(\RR)$ is stable with constant $(1,-\eta)$, due to the resolvent bound \eqref{e:res-estimate}. It is direct to check that $\tilde{\cL}(t) \deq \cL(t) + \eta$ satisfies 
\begin{equation*}
\Norm{\left(\tilde{\cL}(t_1) - \tilde{\cL}(t_2)\right) \tilde{\cL}(t_3)^{-1} f}_{L^2}\leq C \absolute{t_1 - t_2} \norm{f}_{L^2},
\end{equation*}
since $\uu$ and $\omega$ are continuous. Applying \cite[Theoreme 6.1]{Pazy-83} to $\tilde{\cL}$, we recover that $z(t) = e^{\eta t} w(t)$ satisfies
\begin{equation*}
\norm{z(t)}_{L^2} \leq C \norm{z(0)}_{L^2}.
\end{equation*}
Unfolding the change of variable concludes the proof.
\end{proof}

\bibliographystyle{alphaabbr}
\bibliography{terraces}

\end{document}